\numberwithin{equation}{section}
\numberwithin{figure}{section}
\theoremstyle{definition}
\newtheorem{defn}{\protect\definitionname}
\theoremstyle{remark}
\newtheorem{notation}{\protect\notationname}
\theoremstyle{plain}
\newtheorem{ax}{\protect\axiomname}
\theoremstyle{plain}
\newtheorem{assumption}{\protect\assumptionname}
\theoremstyle{plain}
\newtheorem{cor}{\protect\corollaryname}
\theoremstyle{plain}
\newtheorem{prop}{\protect\propositionname}
\theoremstyle{plain}
\newtheorem{lem}{\protect\lemmaname}
\theoremstyle{plain}
\newtheorem{thm}{\protect\theoremname}
\theoremstyle{plain}
\newtheorem*{assumption*}{\protect\assumptionname}
\theoremstyle{remark}
\newtheorem{rem}{\protect\remarkname}
\newtheorem{principle}{Principle}
\newtheorem{belief}{Tacit belief}
\providecommand{\assumptionname}{Assumption}
\providecommand{\axiomname}{Axiom}
\providecommand{\corollaryname}{Corollary}
\providecommand{\definitionname}{Definition}
\providecommand{\lemmaname}{Lemma}
\providecommand{\notationname}{Notation}
\providecommand{\propositionname}{Proposition}
\providecommand{\remarkname}{Remark}
\providecommand{\theoremname}{Theorem}
\begin{document}
\selectlanguage{british}
\textbf{Preprint}. Published in Foundations of Science

DOI: 10.1007/s10699-018-9573-z
\title{A constructivist view of Newton's mechanics}
\author{H.G. Solari$^{\dagger}$ and M.A. Natiello$^{\ddagger}$}
\address{$^{\dagger}$Departamento de Física, FCEN-UBA and IFIBA-CONICET}
\address{$^{\ddagger}$Centre for Mathematical Sciences, Lund University; }
\begin{abstract}
In the present essay we attempt to reconstruct Newtonian mechanics
under the guidance of logical principles and of a constructive approach
related to the genetic epistemology of J. Piaget and R. García \citep{piag89}.
Instead of addressing Newton's equations as a set of axioms, ultimately
given by the revelation of a prodigious mind, we search for the fundamental
knowledge, beliefs and provisional assumptions that can produce classical
mechanics. We start by developing our main tool: the No Arbitrariness
Principle, that we present in a form that is apt for a mathematical
theory as classical mechanics. Subsequently, we introduce the presence
of the observer, analysing then the relation objective-subjective
and seeking objectivity going across subjectivity. We take special
care of establishing the precedence among all contributions to mechanics,
something that can be better appreciated by considering the consequences
of removing them: (a) the consequence of renouncing logic and the
laws of understanding is not being able to understand the world, (b)
renouncing the early elaborations of primary concepts such as time
and space leads to a dissociation between everyday life and physics,
the latter becoming entirely pragmatic and justified \emph{a-posteriori}
(because it is convenient), (c) changing our temporary beliefs has
no real cost other than effort. Finally, we exemplify the present
approach by reconsidering the constancy of the velocity of light.
It is shown that it is a result of Newtonian mechanics, rather than
being in contradiction with it. We also indicate the hidden assumption
that leads to the (apparent) contradiction. 

\centerline{\today{}}

\textbf{Keywords}: Newtonian mechanics, no arbitrariness principle,
objective vs subjective description, law of inertia, central forces,
energy conservation, laws of Nature, genetic epistemology

\textbf{ORCID numbers:} H. G. Solari: \foreignlanguage{english}{0000-0003-4287-1878}
M. A. Natiello: 0000-0002-9481-7454
\end{abstract}

\maketitle

\section{Introduction}

Probably every physics student has wondered what is behind Newton's
laws. Some may even have read the Principia \citet{newt87} looking
for an answer, just to find a few indications in the introduction
followed by a large hiatus taking them directly to Newton's axioms,
with no clue on how to produce them. While some everyday experiences
can be used to grasp the first and third law, the second law appears
to be arbitrary (to say the least). Are there principles operating
behind the laws? Is there a unity among them? What do they hide? These
questions have bugged all scientists exposed to physics as they have
bugged us since we were students. They are relevant for the constructivist
approach known as \emph{second order science} \citep{muel14,liss17}
as well. Are the foundations of physics intuitions of powerful minds
subject to no other rule than an \emph{a posteriori} test of usefulness?
Are they useful pragmatic beliefs subject to no rule of construction
(an idea going back at least to Peirce \citep{peir31,burk46}) to
be replaced only when a new genius handles us a new set of axioms?
Is then reasoning in physics restricted to the deductive form, to
instrumental reason? Or is there room for critical reason searching
for the foundations? Shouldn't a philosophy of physics follow Hegel's
dictum: ``progress in philosophy is rather a retrogression and a
grounding or establishing by means of which we first obtain the result
that what we began with is not something merely arbitrarily assumed
but is in fact the truth, and also the primary truth.'' \citet[par. 101]{hege01}?

On the utilitarian side, reasoning is the most powerful adaptation
of the animal we call human being to the conditions of Nature, for
reason allows humans to foresee the outcomes of events and to act
taking advantage of opportunities, avoiding dangers and unfavourable
conditions. As a consequence, inasmuch \textcolor{black}{as} reasoning
enhances adaptive capabilities, it is a part of natural selection.
This selection took place in front of the difficulties placed by natural
phenomena. 

On the idealistic side, a long-term desire of mankind is to \emph{understand
Nature}. What is implied in it? Understanding Nature is a search for
harmony, it means trying to see how all apparent contingencies (good
or bad) are in fact aspects of different expressions of the totality
implied in Nature (or can be constructed as such). Even in the quest
for understanding elusive matters as mathematical chaos, the intellectual
exercise is not an apology of what exists beyond all possible organisation/understanding
but rather, to find the perspective that allows us to see harmony
in what, at first sight, was perceived as disorder \citet{sola96l}.
This view is related to the concept of \emph{praxis, }which in phenomenology
is considered ``a human activity that is pursued as an end in itself,
that is, a terminal goal of human  living rather than as practical
problem-solving (\emph{techne})'' \citep{heel03} . 

Understanding Nature demands the development of several processes,
being the most immediate one to strip off the particularities that
dress observations (in relation to the matter under discussion), thus
disclosing properties common to them and relations that often have
been named the\emph{ essence} \citep{huss83}. This process is associated
to abstraction. We emphasise here that in these terms, essence is
not absolute but instead relative to the posed questions. 

For example, for any discussion of the biomechanics of a horse, the
colour of its coat is irrelevant, yet if what we are to discuss is
the ``market price'' of the horse, the coat usually plays a significant
role in a predictive theory. The reader may want to distinguish the
act of abstracting away colour when considering biomechanics from
more demanding idealisations requiring some sort of extrapolation
such as e.g., regarding the dynamics of frictionless bodies as the
(unattainable) limit of the behaviour of bodies subject to different
degrees of friction. For this process Galileo used the word \emph{idealisation}
(see e.g., \emph{Discorsi e Dimostrazioni Matematiche Intorno a Due
Nuove Scienze} \citep[day 4]{gali14}).\textcolor{black}{{} }We will
use in the sequel the word \emph{idealisation} without distinguishing
between both types of processes (we notice that Husserl used the word
\emph{ideation} for the process of intuition transforming the observed
into ideas). In this process of idealisation experience is put in
terms of relations between idealised (abstract) objects through synthetic
judgements (Kant). These relations are tentative and are always to
be suspected of being in contradiction with new observations; we call
them \emph{theories}. 

Descartes introduced the expression ``laws of Nature'' referring
to principles of physics concerning the motion of bodies (\emph{Discourse
on method}  \citep[p. 19]{desc15}). The dogma that Nature follows
its own laws, which can then be discovered, soon gave birth to a new
child: Newton's laws, presented in his \emph{Philosophiæ Naturalis
Principia Mathematica}, \citet{newt87}. These developments represent
the beginning of the scientific revolution which includes the mathematisation
of the idealised relations (Galileo) and a reconstruction of the idea
of ``cause'' to be compatible with idealised relations. Yet, Aristotle's
dictum ``we think we do not have knowledge of a thing until we have
grasped its why, that is to say, its cause'' (Physics, \citep{aris50bce}
) remained valid. In this sense, the quest of knowledge maintained
a continuity from the ancient greeks to the modern scientists.

The understanding of causative relations is the long-term programme
of science. However, the relation of cause and effect is not free
of debate. It was tackled by Hume \citep{hume96} as a necessary relation
between contiguous objects. For Hume, causes must always precede effects.
As realised later by Russell \citep{russ12}, cause became a confusing
term seldom used by scientists. He indicated that instead of causes
scientists use ``state functional relations between certain events
at earlier and later times or at the same time\textquotedbl , he
uses the name \emph{determinants} for these events. Both the determinants
and the relations are necessary to answer Aristotle's question: \emph{why
it happens?} and as such, they are Aristotelian causes.

In the crusade for the ``elimination of metaphysics'', Carnap \citep{carn59}
asserts ``we say of a thing or process \emph{y} that it 'arises out
of' \emph{x} when we observe that things or processes of kind \emph{x}
are frequently or invariably followed by things or processes of kind
\emph{y} (causal connection in the sense of a lawful succession).''
Cause in Carnap takes the form of a relation between observed things
or processes, insisting in Hume's old empiricist view. Let us challenge
such extreme empiricist notion. Consider a rectangular cloth suspended
by strings tied to its four corners. If we let a ball rest on the
cloth we observe a deformation of the cloth and the formation of some
wrinkles that evidence tensions. Is the ball the cause of the wrinkles
and the deformation? Or perhaps the wrinkles have attracted the ball?
Such a simple matter is not easy to decide using Carnap's dictum.
The deformation and the presence of the ball resting on the cloth
are simultaneous; there is no precedence in time. However, it has
never been observed that by making wrinkles on a cloth we can attract
balls in the surroundings. Clearly, we say that the ball is the \emph{cause}
of the deformation and the wrinkles (this example is adapted from
\citep[p. 183]{kant87}). Thus, causes can very well be simultaneous
with effects and the decision about what is the cause of a phenomenon
is taken based on reason and on other experiences. But physics requires
more than this type of analysis. Take the ball, lift it and let it
drop. Is there a relation between what makes the ball fall and the
cloth to wrinkle? We all know that there is a relation between not-falling
(staying on the cloth) and wrinkles, and both events point towards
an element left outside in the previous analysis: gravitation. Without
gravitation (or with lesser gravitation) we would observe no (or smaller)
deformation, and no (or slower) fall. Gravitation can be inferred
but not observed (the same can be said of tension). In our observations,
gravitation is acting as a necessary element in our explanation, i.e.,
a cause in Aristotle's perception. Thus, if we want to construct knowledge
we have to abandon the crusade against metaphysics and allow ingredients
that are produced by our mind; in this case elements such as gravitation
that are not sensible but rather inferred (call them meta-sensible
if it pleases the reader). We shall call them \emph{metaphysical}
(where does physics end?). We notice that by so doing the time-order
between causes and effects is erased to a good extent, for gravitation
is there before, during and after the time the ball rests on the cloth,
although what it counts for us is that it was before and during the
experiment. The fall of the ball on the cloth (with the subsequent
coming to rest) and the wrinkles are the result of at least one common
cause. In the empiricist view, metaphysics is not absent but rather
hidden in habits and commonplaces. We cannot account for experience
without some degree of idealisation, i.e., metaphysics. We observe
that putting metaphysics back into science is suggested as well in
\citet{liss17}. 

Our use of cause will indicate relations of the kind described by
Russell unless otherwise stated. For example, we will later state
that ``force is the cause of acceleration'' since force expresses
a functional relation between a change in velocity, time and other
physical characteristics. Notice that in this case there are no \emph{determinants}.

Equipped with reasoning, idealisation and an understanding of cause
in order to address natural phenomena, in the present work we will
try to distinguish the diverse elements that contribute to the construction
of classical (Newtonian) mechanics elaborating from the construction
of reality in childhood as described by \emph{genetic epistemology}.
We find three clear sources for these contributions, namely: (a) requisites
of reason, named \emph{principles} (b) intuitive building blocks that
are constitutive of our understanding after their production in our
early contact with the world (called \emph{axioms}) and (3) provisional
\emph{assumptions} with respect to the organisation of the sensible
world which, as pragmatical beliefs \citet{peir31}, are the main
target of procedures such as falsation (or falsification) \citet{popp59}
and retroduction (often called abduction) \citet{peir31,burk46}.

With the word \emph{principle} we mean a requisite of reason prior
to any theorisation or empiric tests. Two principles will be stated
in the next section, being the \emph{Principle of no Arbitrariness}
(NAP) the most far-reaching. We note on passing that the word ``principle''
has other uses as well (e.g., Newton's third law is also called the
\emph{Principle of Action and Reaction}). As for axioms, they are
broad building blocks that constitute the basis of a theory. In theories
about Nature, axioms are to a large extent motivated by the way human
beings relate to Nature and may be questioned if they fail in this
respect. Axioms are related to intuitions in Husserl's phenomenology
\citep{huss83}, since we proceed first by removing spatio-temporal
determinants and further incorporating the concept (the essence, \emph{Eidos}).
Husserl states ``seeing an essence is precisely intuition''. Within
a mathematical theory, however, the axioms are accepted without need
for proof. Assumptions, in turn, are local decisions in order to limit
the scope of the theory according to the desired goals. Consider for
example: ``in the sequel we consider forces of instantaneous action
at a distance''. The axioms and previous theory may encompass other
types of forces as well, but our local interest in a given context
is delimited by making this (or other) assumption. Mathematical theories
of Nature are usually built up by axioms and assumptions, where axioms
represent our intuition of Nature, our experience, and are the starting
point of empiricist theories. We add here a higher level to this structure
which comes from the subject and it precedes and rules intuitions.
We argue that this principle is a fundamental element in Physics.

This work is the result of an interplay between philosophy, psychology,
mathematics and physics, traditions of thoughts that we do not conceive
as autonomous but rather different realisations of the same general
form that we call reason, this is, reason as it is needed in this
or that field of use (application). The reader not familiar with this
synthetic form, and perhaps not comfortable in all of the particular
forms is invited to skip the unfriendly details in a first reading.
For example, proofs are unavoidable for mathematicians but physicists
will often adopt a ``for the moment I trust you'' attitude in a
first reading. 

The mathematisation of NAP is in Section 2, the application of NAP
to space and time and a new insight into Galileo's transformations
is to be found in Section 3. Sections 4 and 5 show how NAP enters
in Newton's laws. These two Sections contain the most demanding mathematics.
Section 6 is devoted to reflecting about Newton's mechanics with connections
to other criticisms and analyses of Newton's work. Section 7 treats
the speed of light within the Newtonian system. We first show that
what is measured is expected to be independent of the Galilean frame.
Next we uncover a hidden assumption that leads to the well-known problem
between a constant speed of light, Newton's law and the hidden assumption.
Readers mainly interested in this matter may want to skip Sections
4 and 5 (Section 3 is required). Within this discussion the last mathematical
difficulty appears in Subsubsection 7.1.2 where we adopt, for the
sake of the exercise, absolute time and absolute space showing that
even substantivalists \citet{maud93} have options within the Newtonian
world to account for a constant speed of light. This is an example
of how constructivism offers new forms of thinking physical problems.
Finally Section 8 sums up the manuscript.

Natives of psychology and philosophy will have to endure our speaking
of those subjects as foreigners. Further, we do not pretend to be
disciplinary experts in all of the above mentioned fields, rather
we use the disciplined thought to corroborate, support and sharpen
our own (free) thinking. The reader is warned that this work is made
of undisciplined thinking more in the ``prophetic tradition'' than
in the ``scholastic'' one \citet{nisb71}.

\section{No arbitrariness principle}

In any rational construction attempting to logically articulate the
natural world, we are bound to keep elements not based in reason or
evidence outside the construction if possible. Since what is determined
by chance, whim, or impulse, and not by necessity, reason, or principle
is called \emph{arbitrary}\footnote{See for example, The Free Dictionary, https://www.thefreedictionary.com/},
we adopt the attitude of rejecting arbitrary practices. If it were
not possible to achieve a full exclusion of arbitrariness, we must
make sure that the arbitrariness introduced plays no role in answering
valid questions with respect to Nature. While it is certainly possible
to uproot the questions whose answers depend on arbitrary decisions,
in constructing a theory we must strive to avoid the arbitrariness
as much as possible and to keep the valid questions as broad as possible. 

This approach to understanding will be called the \emph{No Arbitrariness
Principle.} Resigning this principle i.e., admitting arbitrariness
as a basic ingredient, would render communication extremely difficult.
The principle delimits the conditions with which knowledge about Nature
is to develop. As such, it is therefore prior to physics and experience
and cannot be questioned empirically. In a theory of natural phenomena,
subsequent axioms –and even more assumptions– are connected to empiric
evidence and may be falsified empirically, thus suggesting the need
for a more adequate theory. 

Struggling to avoid arbitrariness in a physical description immediately
reflects in the invariance of this description with respect to elections
for which we do not have a reasonable foundation. For example, measurements
imply comparisons and the units of measurement are arbitrary to a
good extent, yet velocity will always be written in terms of a ratio
between distance and time. This form is independent of arbitrary choices
of units, although the figure of the velocity changes according to
the chosen units. 

Within the mathematical theory we are about to develop, we will formulate
these ideas as \emph{principles}:

\begin{principle} There is a material world we perceive with our senses (including experiments).
\end{principle}We call this world Nature and we strive to find laws that describe
its perceived organisation. In this context we state the main idea
of this work:

\begin{principle} {\bf [ No Arbitrariness Principle (NAP)]} No knowledge of Nature depends on arbitrary decisions. 
\end{principle}\label{NAP}

In short, NAP establishes that: \emph{If there is no reason for it,
we shall make no difference.} In this simple form it can be recognised\emph{
}as Leibniz' principle of sufficient reason \citet{ball60} put in
a cognitive context\footnote{The principle of sufficient reason (PSR) has made little or no progress
since the Clarke-Leibniz polemic \citet{ball60}. A discussion among
substantivalists and relationalists has re-emerged with Relativity
Theory but in a form in which philosophy is subordinated to the ``success''
of physics. Within such an intellectual disposition it has been  declared
that ``neither the PSR nor the PII enjoys at present unquestionable
philosophical credentials'' \citet{maud93} (PII stands for Principle
of Identity of Indiscernibles). Actually, the discussion appears as
frozen in time, lacking critical contributions and it still revolves
around God. In contrast, NAP stands for a critical as well as operative
view, it has a long tradition in mathematics where the expression
``without lost of generality we can assume...'' is frequent in proofs,
meaning that the results will not depend on our arbitrary choices
but for the sake of the argument a choice must be made. This is: arbitrariness
should not and will not bear any consequence in what is being proved.
The metacognitive instruction received by every student of physics:
``the final result should not depend in your election of units or
your choice of path to reach it'' manifests the same conviction:
arbitrariness has no part in truth.}. 

The arbitrary decisions adopted while constructing a theory can be
viewed as forming a group of transformations operating over the laws
and/or the magnitudes involved in the theory. Each choice of elements
in the set of arbitrary assumptions is used to produce a presentation
of the theory and all presentations must be \emph{equivalent,} i.e.,
conjugated by elements of the group of arbitrary decisions (we will
formalise this idea in mathematical terms in Subsection \ref{subsec:Mathematisation}).

It is important to notice that this approach evolves from the construction
of reality by the child. We quote Piaget on groups: ``There is a
mutual dependence between group and object; the permanence of objects
presupposes elaboration of the group of their displacements and vice-versa.
On the other hand, everything justifies us in centering our description
of the genesis of space around that of the concept of group. Geometrically,
ever since H. Poincaré this concept has appeared as a prime essential
to the interpretation of displacements {[}...{]} But it is necessary
to remember that we shall attribute the widest meaning to this concept
for if, as recent works have shown, the logical definition of the
group is inexhaustible and involves the most essential processes of
thought, it is possible, purely from our psychological point of view,
to consider as a group every system of operations capable of permitting
a return to the point of departure.'' \citet{piag99}

\subsection{Objectivity and intersubjectivity}

If we observe a cat stalking a bird in our garden, we perceive a distance
between cat and bird and we have indications that both cat and bird
perceive something similar, since the cat does not launch the attack
until the distance is short enough and the bird does not fly away
until the cat is threatening close. 

In the next Section we will formalise these perceptions in the concept
of \emph{relative position}, saying that it is \emph{objective }(as
well as cat and bird are), while our perceptions of it are \emph{intersubjective},
meaning that the cat's, bird's and any observer's perceptions can
be put in one-to-one correspondence. In most of the text we will make
no difference between objective and intersubjective. When we make
a difference it corresponds to the fact that for sensible objects
we produce the internal representation by experience or intuition
(\emph{eidetic seeing or ideation} in Husserl) leaving aside contingent
elements, but there are other objects (essence, Eidos) which are the
datum of eidetic intuition \citep{huss83}. Thus we call 'objective'
representations produced by ideation and 'intersubjective' all representations
irrespective of the origin. As such, space is objective but time is
only intersubjective. A corresponding distinction was made by Kant
\citep{kant87} in his discussion of ``the a priori of knowledge''.

The internal representations need not be the \emph{same} but just
corresponding ones. Let $x_{CB}$ be a symbol for the idea of relative
position between cat and bird. We are used to think that the observer
$a$ represents it as $x_{CB}=x_{C}^{a}-x_{B}^{a}$, which is in fact
a synthetic judgement -not a mathematical one- since it proposes a
relation between three different concepts: the positions $x_{C}^{a}$,
$x_{B}^{a}$ of cat and bird relative to the observer (or to a reference
of her/his choice) and the relative position between cat and bird
$x_{CB}$. Since the observer entered the picture by the arbitrary
action of referring positions with respect to her/himself, this expression
must be subject to the no arbitrariness principle, NAP. It is important
to realise that $x_{CB}$ is a symbol that relates to the sensory
perceived world, not an internal mathematical object.

The relation between mathematics and objectivity has been addressed
from several points of view in the past. For example: the objectivity
of mathematics (post-Hilbert), the objectivity of statistical models
\citep{rasc68}, or the problem of meaning and objectivity \citep{hill00}.
None of the points of view which we are aware of attempts to formalise
objectivity in mathematical terms.

\subsection{Mathematisation of objectivity\label{subsec:Mathematisation}}

Consider a set $E$ of concepts/magnitudes and a set $A$ of arbitrary
decisions, necessary for its representation. Let $R_{a}(e)$ be a
representation of the concept $e\in E$ depending on the arbitrary
decision $a\in A$ and let $F$ be a ``natural law'' expressed as
$F(R_{a}(e))=0$. For example, if the concept $e$ is ``distance'',
$R_{a}(e)$ is the real number giving the distance according to $a$
and the ``natural law'' is some mathematical expression involving
this number, such as $R_{a}(e)-C=0$ if the natural law expresses
that this distance has the constant value $C$.
\begin{defn}
\textbf{\label{def:Intersubjectivity}Objectivity}: A law is objective
if $F(R_{a}(e))=0$ holds for any $a\in A$.
\end{defn}
Consider an invertible transformation $T_{ab}$ that maps a representation
$R_{a}$ onto another representation $R_{b}$ ($T_{ab}\circ T_{ba}=Id$).
By NAP, Definition \ref{def:Intersubjectivity} can be restated as:

\begin{equation}
F(R_{a}(e))=0\Leftrightarrow F(T_{ba}R_{a}(e))=F(R_{b}(e))=0.\label{eq:objectivity}
\end{equation}

The transformations $T_{ab}$ form a group\footnote{We notice that in a structural realist work Delhôtel \citet{delh17}
has considered the restricted case of transformations concerning space-time
following the relativistic tradition born out of Poincaré's principle.
He identifies ``a major item on the structural realist agenda: being
able to demonstrate that structural features are precisely and only
those that are preserved across theory change, in one fashion or another,
exactly or approximately'' and later identifying ``equivalence principles''
as one of such structures. What we discuss here is stronger and deeper,
because we account for the reasons why equivalence structures are
required in a more general setting than Poincaré's Relativity Principle
(that will be briefly and critically discussed below). The structure
identified in \citet{delh17} is a particular case of the mathematisation
of NAP presented here. \foreignlanguage{english}{The requirement of
having a group of transformations connecting descriptions differing
in arbitrary choices is recognised in} \citet[(sec. 3)]{delh17} not
as a requisite of reasoning but only as a consequence of the postulates
of the dynamics.} $T$. We assume that: 
\begin{enumerate}
\item $T_{ab}\circ T_{bc}=T_{ac}$, there exists a composition law.
\item $T_{aa}=Id$, there exists an identity. 
\item $T_{ab}=(T_{ba})^{-1}$or $(T_{ba}\circ T_{ab})=Id$, an inverse exists.
\item $T_{ab}\circ(T_{bc}\circ T_{cd})=(T_{ab}\circ T_{bc})\circ T_{cd}$,
the composition law is associative. 
\end{enumerate}
As a consequence, $T_{ab}\circ(T_{bc}\circ T_{ca})=Id=(T_{ab}\circ T_{bc})\circ T_{ca}$
and $T_{ab}\circ(T_{bc}\circ T_{cd})=T_{ab}\circ T_{bd}=T_{ad}=T_{ac}\circ T_{cd}=(T_{ab}\circ T_{bc})\circ T_{cd}$. 

\section{Space, Time and Observers}

The system of concepts that allows the organisation of reality is
dialectical, with multiple concepts coming into existence at once.
Certain concepts in this system are meaningful only in relation to
other concepts introduced simultaneously. In all dialectical constructions,
the tension of the opposites is the engine of understanding, but the
terms in opposition present no difference between them other than
being just the terms of the opposition. Hegel's dialectic of being
and not-being is perhaps the clearest example of this constructive
procedure \citep{hege01}. In our case, the concepts: ego, alter,
identity, object, space and time are the fundamental building blocks
for the construction of understanding; they cannot be referred to
previous concepts because \emph{understanding} can only be referred
to its opposite: \emph{not-understanding}. Thus, any attempt at defining
these terms will never be completely satisfactory since we will have
to resort to some complicity from the intuitions of the reader. We
call this process \emph{dialectical openings to understanding}\footnote{In our daily quest to understand nature we make use of ideas and concepts
previously produced by our society that we have \emph{naturalised}
(included in Nature although they belong to our cultural baggage).
However, the recursion to previous understanding has an end and it
is not always possible. When we arrive to such a dead end while exercising
the critical reason in the search for the fundamentals we have exhausted
the possibilities of analysis, the limits of Analytical science \citet[(par. 1720-1764)]{hege01}.
Hegel states ``Synthetic cognition aims at the comprehension of what
is, that is, at grasping the multiplicity of determinations in their
unity. It is therefore the second premise of the syllogism in which
the diverse as such is related'' . ``Synthetic cognition'' is presented
by Hegel in contradistinction to ``Analytic cognition''. Kant \citet{kant87}
considered time an a-priori of knowledge, and it is certainly an a-priori
for the adult as well as space is. Piaget taught us that the genesis
of the notions of space and time is found in the early childhood \citet{piag99}.
Accounting for this philosophical tradition, we call \emph{dialectical
openings to knowledge} the dialectical constructions that set up the
basic elements for the knowledge of Nature, such as time, space and
object. In such a way, we emphasize their synthetic origin.}. In practical terms, when we mention a concept (in this section)
using a word that is defined later in the text, it should be understood
in its intuitive form, later to be formalised in a compatible form.
We can identify the dialectical pairs ego-alter and change-permanence
(time-identity). As for space, considering that it implies the concept
of distance and with it the idea of what is within reach and what
belongs to our world but cannot be reached without effort, it is the
result of the dialectic \emph{near me} (reachable) and \emph{at a
distance} (not reachable, i.e., not near me). Notice that the notion
of oppositions as first principles is a subject already considered
in Aristotle's \emph{Physics} \citep{aris50bce}, also referring to
previous philosophers.
\begin{notation}
In this Section we will try as much as possible to use superindices
$a,b$ to indicate subjective (i.e., depending on arbitrary decisions)
concepts, quantities, etc., while related objective entities will
have no superindex.
\end{notation}

\subsection{Space and time}

\subsubsection{Space}

Space is spontaneously conceived by children along the construction
of reality \citep{piag99}. It is not possible to speak separately
of object, space, time and ego, because the construction produces
all of them as a single dialectic system (we could try to straighten
this matter up philosophically, although this would not be loyal to
the child's structuring of thought). By exercising our memory we perceive
some degree of permanence of ourselves, \emph{ego}, and thus an idea
of identity. The real world acquires continuity, we do not longer
watch the movie frame by frame. Together with the recognition of ourselves
comes \emph{alter}, i.e., \emph{not-ego}, and with the help of our
memory some sort of permanency of \emph{alter} emerges as well, the
object that later, stripped of all other characteristics will be the
``body'' referenced in physics. Therefore, objects, people and other
things acquire some sort of identity, inasmuch as we remove some attributes
of them, mainly positions in space. Then, if an object (or ourselves)
is in a place in one movie-frame and in a different place in another
movie-frame, we no longer say they are different objects but rather
that there was a \emph{change} in the positions as well as some \emph{permanence}:
the object itself. The sequence of changes in places is the primary
idea of time. Much later the child will conceive herself as being
of the same condition than family, pets and toys, this is, she will
place herself in the space. Space, with its implication of distance
(a concept that is easy to root in sensorymotive intelligence) is
opposed to the unity of the cosmos of the child.

The perceived space-time, centered in the observer (\emph{ego}) always
has \emph{ego} distinguishing a reference point. This primary, subjective,
notion of space is compatible with an empty space, a holder of objects,
an idea that in physics goes back, at least, to Newton. Emptiness
is in fact a resource for the subsequent suppression of the observer.
By NAP, the required objectivity of the laws of physics manifests
in that no law is objective if it depends on \emph{ego}. The mathematisation
of this idea is expressed by eq.(\ref{eq:objectivity}), where the
group $T$ corresponds to spatial translations and rotations. 
\begin{ax}
A set of three orthogonal directions in the real (sensible) space
is selected and represented by the symbols $\mathbf{e}_{i}$. Any
objective position is then represented by $x_{AB}=\sum_{i}x_{i}\mathbf{e}_{i}$
with $x_{i}$ real numbers.
\end{ax}
Objective space is hence built following Descartes and is represented
as a vector space. Any other choice of reference vectors can be related
to the initial one by a linear transformation, $\mathbf{e}_{j}^{\prime}=\sum_{i}R_{ji}\mathbf{e}_{i}$.
It follows that $\sum_{j}x_{j}^{\prime}\mathbf{e}_{j}^{\prime}=\sum_{i}x_{i}\mathbf{e}_{i}$
for any objective position, hence $x_{j}^{\prime}=\sum_{i}\left[R^{-1}\right]_{ij}x_{i}$.
The observer can then select a reference point and three independent
orientations to describe the position of objects in the world, yet
leaving the distance as objective. The class of equivalence thus generated
is described by the group $IO(3)$, the group of isometries (translations,
rotations and reflections).

Subjective space, being empty, provides no way of distinguishing one
direction from another or one reference point from another. As a consequence
of NAP, subjective space is then \emph{isotropic} and \emph{homogeneous}.
We notice that objective space bears some relation to Leibniz' relational
space, while subjective space is related to Newton's relative space
\citet{ball60}.

We recall that Axioms correspond to intuitions based upon experience,
the Cartesian space agrees with the experience of the child and of
the scientist in Newton times. Furthermore, the analytical properties
of e.g., Riemannian spaces are based upon differentiable manifolds
which in turn rest upon collections of local charts of Cartesian type,
a fact that speaks loudly about which one is the intuitive space.

\subsubsection{Time }

Unlike space, time is undoubtedly related to changes, sequences of
changes, rapidity and causal relations \citet{piag99}. This is the
genetic episteme of time; to call ``time'' any other kind of object
is simply to ask for confusion\footnote{\textcolor{black}{Piaget arrived to his view about the genesis of
the concept of time after a series of experiments. The experiments
are conveniently collected in \citep{piag78}.}

The notion of speed, at least in the form of faster and slower, precedes
the notion of time: ``From the point of view of immediate experience,
the child succeeds very soon in estimating speeds of which he has
direct awareness, the spaces traversed in an identical time or the
“before” and “after” in arrival at a goal in cases of trajectories
of the same length. But there is a considerable gap between this and
a dissociation of the notion of speed to extract a measurement of
time, for this would involve replacing the direct intuitions peculiar
to the elementary accommodation of thought to things by a system of
relations involving a constructive assimilation.'' \citet[p. 383]{piag99}.

Aristotle relates changes in position, velocity and time. In modern
notation we could write (for finite $v>0$)
\[
t-t_{A}=\int_{x_{A}}^{x}\frac{v\cdot dx}{|v|^{2}}=\int_{x_{A}}^{x}\frac{|dx|}{|v|}.
\]
We can move back to the perception of the child by making $v$ constant.
Thus, change in position offers a clock, and since space is assumed
to be continuous, so is time (an old reasoning already present in
Aristotle). As long as the moon revolves around the Earth, we have
change and we have time. No interval of time will ever be empty of
changes. 

\textcolor{black}{Other points of view regarding time have been put
forward and continue to exist. For example, time has been conceived
geometrically as well, taking such character in Special Relativity.
Under the geometrical conception McTaggart has even argued that time
is unreal \citep{mcta08}. The incompatibility of the perceived time
with the time of Relativity resulting from Gödel work \citep{gode49}
was discussed and asserted by Bell \citep{bell02}. Here, a distinction
has to be made. On one hand Piaget's view seeks the genesis of time
(and the way we consider time in our daily facts) in the phenomena
and its apprehension as idea. On the other hand, there are instrumental
beliefs whose roots are nurtured by the mud of speculative material
entities such as the }\textcolor{black}{\emph{æther}}\textcolor{black}{{}
and the }\textcolor{black}{\emph{electrical fluid}}\textcolor{black}{{}
veiled to us by mathematical axioms. The different ideas revolving
around the notion of time might originate in overlooked assumptions
of the idealisation method: when we proceed through successive idealisations,
the order among them matters. Dynamical instabilities and irreversibility
(entropy) are the result of first considering the limit of sufficiently
long times and }subsequently\textcolor{black}{{} (if needed) the limit
of infinite measurement precision \citep{arno68}. Deterministic,
time-reversible dynamics rests on taking these limits in reverse order.
The school of Brussels showed that, for some systems, irreversibility
and time-reversal possibilities }respond to different choices of representation\textcolor{black}{{}
\citep{gold81} connected by transformation groups. The truth is that
we have access to finite precision and finite time lapses, while the
rest is only part of the processes of ideation. The arbitrariness
that NAP tries to remove may take subtle and unexpected shapes.}}. Time is the word we use to express our perception of change; it
is change in its most abstract form. Aristotle indicates ``without
change there is no time'' \citet[Book IV, Ch. 11]{aris50bce}.

The perception of change we use to construct our intuition of time
rests on natural processes. An invariant characteristic of our perception
of Nature is that experienced processes have \emph{beginning} and
\emph{end}. Husserl \citet[Matter of fact]{huss83} states ``The
founding cognitional act of experiencing posit something real \emph{individually;}
they posit it as something factually existing spatio-temporally, as
something that is at \emph{this} temporal locus, that has this duration
of its own and a reality content which, with respect to its essence,
could just as well have been at any other temporal locus.'' 

Time is a verbalisation of the dialectical tension of the pair being/not-being.
In principle, time is associated to objects (or \emph{measured}) by
the order of the sequence of changes (between sundown and sunrise
the rooster sings, and if the rooster wakes me up every morning, it
sings before -or rather while- I wake up, because this song is (part
of) the \emph{cause} of my awakening).  \citet[Ch. 4]{piag99} discusses
the development of the notion of time in the child beginning from
``As early as his reflex activity and the formation of his first
habits, the nursling shows himself capable of two operations which
concern the elaboration of the temporal series. In the first place,
he knows how to coordinate his movements in time and to perform certain
acts before others in regular order. For instance, he knows how to
open his mouth and seek contact before sucking, how to steer his hand
to his mouth and even his mouth to his thumb before putting the thumb
between his lips, etc.'' The child proceeds (always according to
Piaget) in five stages to develop his notion of time, always based
on series. Starting at the third stage memory plays an increasingly
relevant role.

One striking difference between time and space from the psychogenetic
point of view is that while for the change of position in space of
an object there is a possible operation that reverts the change, there
is no operation capable to revert the temporal order of events. Unlike
distance, that reaches us as a perception resulting from the telemetry
of our binocular sight, time does not reach us as a perception, but
it is rather the result of memory, a log of changes and a logical
process that discriminates between the relative order of events. It
follows that time is measured by comparing sequences of changes. By
NAP, any transformation relating time-perceptions of different references
(\emph{ego}s) is constrained to preserve this ordering. Therefore,
while arbitrary individual subjective time may differ among observers,
they are all related by strictly monotonic (bijective) mappings. The
underlying group is the set of strictly increasing continuous functions
$f:R\mapsto R$ with standard composition of functions as the group
product.
\begin{ax}
\label{time} The order between the events (the various determinants
and effects) involved in a causative relation is fixed.
\end{ax}
This axiom relates to the concept of ``determinism'' discussed around
\emph{Equation (A)} in \citep[p. 199]{russ12}. In simpler words,
first the vase falls, then it breaks. It is a vase as long as there
exists a particular cohesion in the material. The alteration produced
by the impact reorders the material in smaller pieces. Hence, without
fall there is no impact, without impact no change in material stress.
Causes here are gravitation and the change in material stress. Determinants
are fall and impact, while breakup (rupture) is an effect. 

When we measure the times of a phenomenon, say the change in position
of a body, we use changes not involved in the process as references
for time. Indeed, we resort to the idealised and imagined order of
all events of the universe, encompassing all possible changes, leaving
no ``room'' between them since a time without changes is a contradiction
in the terms\footnote{If time is conceived as holding a series of events, we may ask whether
it is possible to have a time interval without changes or events (since
each event represents a change). During a time without changes there
would be no heart-beats of the observer, no movement of any object
in the universe, no change that could be used as clock, etc. Between
the two encompassing events of time without change there would be
no way to measure time since measuring it requires something that
changes. We would have then proposed that measurable (sensible) time
differs from ``true time'' by an unaccountable amount, yet all observers
will measure zero time identically since it is a property of monotonic
functions to map open intervals into open intervals. Hence, a time
interval without changes contradicts the early determination of time
as associated to movement (change of position in space) and being
therefore continuous. }. In mathematical terms, time is well represented by a one dimensional
mathematical space such as the real numbers, a notion that we have
been using and will continue to use, along this work. Time-intervals
are referred to this ``background'' of events that are present irrespective
of the phenomenon in study. This background of changes constitutes
a \emph{clock}. We try to use as clocks those devices or observations
that appear to us as regular. As long as we cannot present evidence
that a process runs faster in one circumstance or another, we expect
the relative order between changes in the clock and changes in the
phenomena to remain the same, hence
\begin{assumption}
An objective time can be defined by convening on a process to define
a time-unit.
\end{assumption}
Absolute time, a time encompassing all changes, appears to us in the
same manner in which ego, alter, object and space emerge in the development
of the child to produce a useful organisation of the world. The laws
of physics and absolute time emerge in the construction of physics
at the same step as a consequence of the same class of dialectic opening
that creates the terms of an opposition that produces understanding.
In this case, system-environment (not system) implies absolute time.
\begin{defn}
\emph{Events}. An event is a change in the sensible world that occurs
in a relatively short interval of time and as such is idealised as
instantaneous. We remove from the event the determinations of time
and space, which are thought of as its circumstances. Thus, events
occur at a given location and a given time.
\end{defn}

\subsection{Observers}

The observer (\emph{ego}) describes the world by measuring all distances
with respect to a point of her/his election. We use the notation $x_{A}^{a}$
for the position of body $A$ as measured by observer $a$. When facing
the need of relating positions of different observed bodies, conforming
to NAP and the search for an objective description of physical entities,
we define: 
\begin{defn}
\emph{\label{def:relpos}(relative position): }We call $x_{AB}=x_{A}^{a}-x_{B}^{a}$
the\emph{ relative position} between observed bodies $A$ and $B$,
as seen from the \emph{ego} $a$ at a given moment. 
\end{defn}
\begin{cor}
It is a demand of objectivity (by NAP) that $x_{A}^{a}-x_{B}^{a}=x_{A}^{b}-x_{B}^{b}$
(hence, there is no superindex in $x_{AB}$). 
\end{cor}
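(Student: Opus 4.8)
The plan is to read off the equality from the objectivity of the synthetic judgment $x_{AB}=x_A^a-x_B^a$ of Definition~\ref{def:relpos}, exploiting the fact that the only arbitrary ingredient in a single position $x_X^a$ is observer $a$'s choice of reference point, and that this same ingredient occurs in both $x_A^a$ and $x_B^a$, so that it must cancel upon subtraction. In group-theoretic terms, the change from observer $a$ to observer $b$ is effected by an element of the isometry group $IO(3)$ identified above, and the relevant feature is that its translation part acts on every recorded position by one and the same displacement.

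First I would make that displacement explicit. By the space axiom the objective positions inhabit a vector space, and observer $a$ records each body as the vector from their chosen origin $O_a$ to that body; passing to observer $b$ shifts the origin to $O_b$, so for every body $X$ one has $x_X^a=x_X^b+d_{ba}$ with $d_{ba}=O_b-O_a$ depending on the pair of observers alone and not on $X$. Subtracting the records for $A$ and $B$ then gives
\begin{equation*}
x_A^a-x_B^a=(x_A^b+d_{ba})-(x_B^b+d_{ba})=x_A^b-x_B^b,
\end{equation*}
since $d_{ba}$ enters both terms identically; this is the desired identity.

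Equivalently, and more in keeping with the spirit of the paper, I would invoke objectivity directly: the relative position is posited as an objective concept, so by NAP the synthetic judgment $x_{AB}=x_A^a-x_B^a$ must hold in every representation, as expressed by the objectivity condition~(\ref{eq:objectivity}); equating the expressions furnished by observers $a$ and $b$ returns the same identity, whence $x_{AB}$ carries no superindex. The hard part, I expect, is not the translation but the orientation (rotation/reflection) component of the change of observer: under a change of the triad $\mathbf{e}_i$ the numerical components of $x_A^a-x_B^a$ are carried to those of $R(x_A^a-x_B^a)$ by an orthogonal matrix $R$, so the equality cannot be read at the level of components but only as an identity between the underlying basis-free vectors, whose geometric content---the directed segment from $B$ to $A$---is frame-independent. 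Making precise that the superindex encodes an arbitrary reference choice while $x_{AB}$ names the intrinsic element of the vector space is what upgrades the elementary cancellation into a genuine consequence of NAP.
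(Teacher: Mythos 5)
Your proposal is correct and matches the paper's reasoning: the paper states this corollary without any proof, treating it exactly as you do in your second paragraph --- the relative position $x_{AB}$ is posited as an objective concept, so by NAP (equation (\ref{eq:objectivity})) its representation $x_{A}^{a}-x_{B}^{a}$ cannot depend on the arbitrary choice of observer, which is the whole content of the statement. Your explicit cancellation of the common displacement $d_{ba}$, and especially your caveat that under a change of orientation the identity holds only between basis-free vectors of the objective space and not between component tuples, are sound additions that the paper leaves implicit.
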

In a sense related to Leibniz, objective space is relational. Indeed,
one may suspect that vector spaces have been constructed to this end.
The group relating the arbitrary choices of different observers is
$IO(3)$, the semi-direct product of isometries and translations.

In the sequel, we will call \emph{subjective} the space created by
an arbitrary choice of reference point and base vectors orientations.
\begin{defn}
\label{def:Velocity} \emph{Relative velocity} is the rate of change
of relative position between two bodies with respect to the change
in time,
\[
v_{AB}=\frac{d(x_{A}^{a}-x_{B}^{a})}{dt}=\lim_{\Delta t\rightarrow0}\frac{\left(x_{A}^{a}(t+\Delta t)-x_{B}^{a}(t+\Delta t)\right)-\left(x_{A}^{a}(t)-x_{B}^{a}(t)\right)}{\Delta t}.
\]
\end{defn}

\subsection{Subjective velocity and Galileo transformations}

The relative velocity between the reference point chosen by the observer
$a$ and a body $A$, follows Definition \ref{def:Velocity},
\[
v_{Aa}=\frac{d(x_{A}^{a}-x_{a}^{a})}{dt}=\lim_{\Delta t\rightarrow0}\frac{\left(x_{A}^{a}(t+\Delta t)-x_{a}^{a}(t+\Delta t)\right)-\left(x_{A}^{a}(t)-x_{a}^{a}(t)\right)}{\Delta t}.
\]
We now focus on the subjective operation consisting in setting $x_{a}^{a}(t+\Delta t)-x_{a}^{a}(t)=0^{a}$
and similarly $v_{a}^{a}=\lim_{\Delta t\rightarrow0}\frac{x_{a}^{a}(t+\Delta t)-x_{a}^{a}(t)}{\Delta t}=0^{a}$,
this is to say that for the observer, the point designated as reference
by her/his arbitrary decision does not move (we have added the superscript
$a$ to the zero to indicate the subjectivity). Now the relative velocity
reads 
\[
v_{Aa}=\frac{d(x_{A}^{a}-x_{a}^{a})}{dt}=\lim_{\Delta t\rightarrow0}\frac{x_{A}^{a}(t+\Delta t)-x_{A}^{a}(t)}{\Delta t}-0^{a}\equiv v_{A}^{a}-0^{a},
\]
where we call $v_{A}^{a}$ the \emph{subjective velocity} of $A$
as established by observer $a$.
\begin{prop}
\label{prop:GT}The \emph{Galilean transformation} between observers
(reference points) $a$ and $b$ is given by,

\selectlanguage{english}%
\[
v_{A}^{a}=v_{A}^{b}+v_{ba}.
\]
 It is an operation that belongs to the group associated by NAP to
the concept of subjective velocity.
\end{prop}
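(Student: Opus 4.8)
The plan is to obtain the transformation law as a consequence of the objectivity (via NAP) of relative positions and velocities, and then to check that the resulting maps satisfy the group axioms of Subsection \ref{subsec:Mathematisation}. First I would record a Chasles-type additivity for objective relative positions. By the Corollary following Definition \ref{def:relpos} the relative position carries no observer superindex, so for any observer $c$ we may write $x_{Aa}=x_{A}^{c}-x_{a}^{c}$, $x_{Ab}=x_{A}^{c}-x_{b}^{c}$ and $x_{ba}=x_{b}^{c}-x_{a}^{c}$; the telescoping cancellation then gives the objective identity $x_{Aa}=x_{Ab}+x_{ba}$, valid independently of $c$.

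Next I would differentiate this identity in $t$. Here I invoke the Assumption that an objective time is fixed by convening on a time-unit, so that a single $dt$ is shared by all egos and Definition \ref{def:Velocity} may be applied termwise; linearity of the limit yields the objective relation $v_{Aa}=v_{Ab}+v_{ba}$. To pass to subjective velocities, observer $a$ imposes the convention $v_{a}^{a}=0^{a}$, so that $v_{Aa}=v_{A}^{a}-0^{a}$ identifies $v_{Aa}$ with $v_{A}^{a}$; likewise $v_{Ab}$ is identified with $v_{A}^{b}$ under $b$'s convention, while $v_{ba}$ is the (objective) relative velocity of $b$'s reference point. Substituting gives $v_{A}^{a}=v_{A}^{b}+v_{ba}$, the asserted Galilean transformation --- note that the transformed quantities are subjective although the shift $v_{ba}$ is objective.

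For the group statement, the arbitrary decision is the choice of reference point together with its state of motion, and $T_{ab}$ is the velocity translation $v_{A}^{b}\mapsto v_{A}^{b}+v_{ba}$. I would verify the axioms of Subsection \ref{subsec:Mathematisation} directly: the composition $T_{ab}\circ T_{bc}=T_{ac}$ reduces to $v_{cb}+v_{ba}=v_{ca}$, which is the same additivity differentiated; the identity is $T_{aa}$ with $v_{aa}=0$; the inverse of $T_{ab}$ is $T_{ba}$, since $v_{aa}=v_{ab}+v_{ba}=0$ forces $v_{ab}=-v_{ba}$; and associativity is inherited from that of vector addition. Hence these transformations form exactly the group demanded by NAP for the concept of subjective velocity.

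The main obstacle I anticipate is conceptual rather than computational: one must keep track of the subjective zero $0^{a}$ correctly and justify that term-by-term differentiation is legitimate across different egos, which rests precisely on having promoted time to an objective quantity by the Assumption. The point to emphasise is that $v_{A}^{a}$ is genuinely subjective, depending on the arbitrary election of a stationary reference, so that NAP is what forces the several egos' descriptions to be conjugate under the group of velocity translations.
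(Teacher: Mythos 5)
Your proposal is correct and follows essentially the same route as the paper: both arguments rest on the objectivity of relative position (the Corollary to Definition \ref{def:relpos}), convert the reference-point term into the objective quantity $x_{ba}$ via the subjective-zero convention, and divide by the shared $dt$; the only organisational difference is that you first establish the telescoping identity $x_{Aa}=x_{Ab}+x_{ba}$ and then differentiate, whereas the paper manipulates the finite increments of the subjective coordinates directly while carrying $0^{a}$ and $0^{b}$ along. Your explicit verification of the group axioms anticipates the paper's subsequent Lemma, which derives the group structure by repeated application of the Proposition.
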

\selectlanguage{english}%
\begin{proof}
We begin by writing the equality
\begin{eqnarray*}
x_{A}^{a}(t+dt)-x_{A}^{a}(t)-\left(x_{a}^{a}(t+dt)-x_{a}^{a}(t)\right) & = & x_{A}^{b}(t+dt)-x_{A}^{b}(t)-\left(x_{a}^{b}(t+dt)-x_{a}^{b}(t)\right),
\end{eqnarray*}
 which after a rearrangement reads
\begin{eqnarray*}
(x_{A}^{a}(t+dt)-x_{A}^{a}(t))-0^{a} & = & (x_{A}^{b}(t+dt)-x_{A}^{b}(t))-0^{b}+0^{b}-\left(x_{a}^{b}(t+dt)-x_{a}^{b}(t)\right).
\end{eqnarray*}
Next we observe that 
\[
0^{b}-(x_{a}^{b}(t+dt)-x_{a}^{b}(t))=(x_{b}^{b}(t+dt)-x_{a}^{b}(t+dt))-(x_{b}^{b}(t)-x_{a}^{b}(t))=x_{ba}(t+dt)-x_{ba}(t),
\]
 with $x_{ba}$an objective relative distance. We then have 
\begin{eqnarray*}
(x_{A}^{a}(t+dt)-x_{A}^{a}(t))-0^{a} & = & (x_{A}^{b}(t+dt)-x_{A}^{b}(t))+(x_{ab}(t+dt)-x_{ab}(t))-0^{b}.
\end{eqnarray*}
Dropping the zeroes, dividing by $dt$ and taking the limit $dt\to0$,
we obtain the Galilean transformation between observers. We further
notice that the limit is not a necessary step. 
\end{proof}
\selectlanguage{british}%
\begin{lem}
Galilean transformations form a group having vector addition as internal
operation.
\end{lem}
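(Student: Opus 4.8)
The plan is to realise each Galilean transformation as a translation of subjective-velocity space and to show that, under the natural parametrisation by the relevant relative-velocity vector, composition of transformations is precisely vector addition. By Proposition~\ref{prop:GT}, the transformation passing from the description of observer $b$ to that of observer $a$ acts on any subjective velocity by $v_A^b\mapsto v_A^a=v_A^b+v_{ba}$; that is, it is the translation $G_w\colon v\mapsto v+w$ with $w=v_{ba}$ an element of the three-dimensional velocity space. First I would record that such a $T_{ab}$ is \emph{completely} determined by the single vector $v_{ba}$, and conversely that every admissible $w$ is realised (choose observers whose reference points have that relative velocity). This sets up a bijection $T_{ab}\leftrightarrow v_{ba}$ between Galilean transformations and vectors of the velocity space.

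Next I would compute a composition. Using the composition law $T_{ab}\circ T_{bc}=T_{ac}$ of Subsection~\ref{subsec:Mathematisation}, applying $T_{bc}$ and then $T_{ab}$ sends $v_A^c\mapsto v_A^c+v_{cb}+v_{ba}$, i.e. it is the translation by $v_{cb}+v_{ba}$. To identify this with $T_{ac}$ (translation by $v_{ca}$) one needs the additivity relation $v_{ca}=v_{cb}+v_{ba}$. This is the only non-formal ingredient, and it is obtained by differentiating in time the telescoping identity $x_{ca}=x_{cb}+x_{ba}$ for objective relative positions, which itself follows at once from the definition $x_{XY}=x_X^{e}-x_Y^{e}$ (Definition~\ref{def:relpos}) evaluated at a single observer $e$, its observer-independence being guaranteed by the Corollary to that definition. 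Hence under the bijection above the group product (composition of transformations) reads $w,w'\mapsto w+w'$: vector addition is the internal operation, as claimed.

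Finally I would read off the group axioms from the vector-space structure, either directly or by noting that these transformations are a special case of the abstract group $T$ whose axioms (1)--(4) were already verified in Subsection~\ref{subsec:Mathematisation}. Closure holds because $w+w'$ is again a velocity vector; the identity is $G_0$, corresponding to $v_{aa}=0$ (since $x_{aa}=0$); the inverse of $G_w$ is $G_{-w}$, corresponding to $v_{ab}=-v_{ba}$ (since $x_{ab}=-x_{ba}$); and associativity is inherited from associativity of vector addition. One concludes that $G_w\mapsto w$ is a group isomorphism onto the additive group of the velocity space, so the group is in particular abelian. I expect the only genuine step to be the relative-velocity additivity relation above; everything else is bookkeeping of the group axioms already available from the general construction.
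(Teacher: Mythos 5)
Your argument is correct, and its computational core coincides with the paper's: the paper's entire proof is the chain $v_{A}^{a}=v_{A}^{b}+v_{ba}=\left(v_{A}^{c}+v_{cb}\right)+v_{ba}=v_{A}^{c}+\left(v_{cb}+v_{ba}\right)=v_{A}^{c}+v_{ca}$, obtained by applying Proposition \ref{prop:GT} twice and comparing with a single direct application; this simultaneously delivers closure and the additivity $v_{ca}=v_{cb}+v_{ba}$, while identity, inverses and associativity are left implicit. You reach the same composition law but justify the key identity by a slightly different route, differentiating the telescoping relation $x_{ca}=x_{cb}+x_{ba}$ that follows from Definition \ref{def:relpos} evaluated in a single observer's description; this is sound and arguably more foundational, since it exhibits the additivity as a property of the objective relative quantities themselves rather than as a consistency condition between chained transformations. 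What your version buys beyond the paper's is completeness: the explicit parametrisation $T_{ab}\leftrightarrow v_{ba}$, the verification of the neutral element via $v_{aa}=0$, of inverses via $v_{ab}=-v_{ba}$, of associativity, and the observation that the group is abelian and isomorphic to the additive group of velocity vectors. The only caveat worth noting --- and it applies equally to the paper's proof --- is that for arbitrary observers $v_{ba}$ may depend on time, so the translations form a group at each fixed instant; the parametrisation by constant vectors becomes literal only for the inertial class introduced in the following section.
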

\begin{proof}
Repeated use of Proposition \ref{prop:GT} gives, 
\begin{eqnarray*}
v_{A}^{a} & = & v_{A}^{b}+v_{ba}\\
 & = & \left(v_{A}^{c}+v_{cb}\right)+v_{ba}\\
 & = & v_{A}^{c}+\left(v_{cb}+v_{ba}\right)=v_{A}^{c}+v_{ca}.
\end{eqnarray*}
\end{proof}

\section{The law of inertia}

\subsection{Laws of Nature}

The meaning of \emph{laws of Nature} deserves some examination. In
western culture before the Enlightenment, it remitted us to God's
blueprints for the universe as in the early times of Descartes, Newton
and Leibniz. After the Enlightenment, the laws of Nature must rest
on reason \citet{kant83,kant98}. The question ‘What is a law of Nature?’
has been extensively debated in contemporary philosophy of science.
Here we will not focus on what makes a statement to be a natural law.
Nevertheless, no matter how that question is answered, certain general
features of laws can be recognised. The laws of Nature correspond
to fundamental relations in situations in which a small (minimal)
portion of the universe, the \emph{system}, is considered (through
a process of idealisation) as isolated from its environment, i.e.,
the complement in the Universe of the ideally-isolated system. Such
a notion implies that the internal organisation of the system -which
the law will make explicit-, must be independent of the environment
since this is the nature of the concept we are seeking. Hence, the
law must hold with independence of the relative location of the system
with respect to the environment and shall not be affected by the background
of changes. In our perspective, there are no laws \emph{of Nature}
but rather \emph{laws for the understanding of Nature}, which themselves
are subject to the laws of reasoning and include in their ontogeny
both experience and usefulness: the object and the subject.

\subsection{The law of inertia}
\begin{defn}
An \emph{isolated body} is an idealisation consisting in extrapolating
the (short-time) motion of bodies that are perceived to be not interacting
with other bodies.

This perception could originate in the fact that when the distance
between bodies is sufficiently large or the time of the observation
is sufficiently short, interactions do not show any appreciable effect
(i.e., it is an idealisation). 
\end{defn}
An isolated body can be regarded as being alone in the universe. As
such, it defines by itself a privileged place and reference point.
However, when we deal with several ideally isolated bodies, we must
consider the problem of their changing (relative) distances (which
is a result of the same idealising process). By NAP, the description
given from the perspective of an isolated body must be equivalent
to the description given by any other isolated body inasmuch as the
particularity of the description is only that the observer is \emph{isolated}
(i.e., not influenced in its motion by other bodies). The condition
of a body as isolated is a condition of permanence, hence, when compared
with time (the measure of change) it has to be represented by a zero
derivative of its state. Therefore, we define, 
\begin{defn}
\emph{Inertial set} is the collection of isolated bodies. The \emph{inertial
class} of observers consists of those observers that use an element
of the inertial set as point of reference.
\end{defn}
When seeking a fundamental law for isolated bodies we must consider
first the possibility of giving them fixed relative positions, but
such a law contradicts our current perceptions (we recall that in
the Aristotelian physics the motion of bodies required causes, i.e.,
``forces'').
\begin{assumption}
\label{assu:The-inertial-set} There exist at least two isolated bodies
that are not at rest relative to each other.
\end{assumption}
The next possibility to be considered is:
\begin{lem}
\label{lemma:Inertia} Isolated bodies move with constant relative
velocity\@. 
\end{lem}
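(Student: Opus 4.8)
The plan is to recast the law as the vanishing of the relative acceleration of two isolated bodies, since constancy of the relative velocity $v_{AB}$ of Definition~\ref{def:Velocity} is exactly the statement $\frac{d}{dt}v_{AB}=0$. I would treat this as an application of NAP to the objective quantity $v_{AB}$, whose objectivity is already secured by the Corollary to Definition~\ref{def:relpos} together with Definition~\ref{def:Velocity}. The physical input is the permanence of the isolated condition, already announced in the text as a vanishing derivative of the body's state; the entire content of the proof is to identify the correct \emph{order} of that derivative and to verify that no residual arbitrary quantity is left over.

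First I would dispose of the zeroth-order reading of permanence, namely fixed relative positions ($v_{AB}=0$): this is excluded by Assumption~\ref{assu:The-inertial-set}, which grants at least two isolated bodies not at rest relative to each other. Hence the permanent ``state'' cannot be the relative position, and one is pushed up one order to the relative velocity. I would then form the objective relative acceleration $a_{AB}=\frac{d}{dt}v_{AB}$ and argue that it must vanish by NAP: for genuinely isolated bodies there is no interaction that could fix its magnitude, while the isotropy and homogeneity of subjective space (Section~3) leave no preferred direction or location, and the absence of interactions leaves no preferred instant. A nonzero $a_{AB}$ would therefore be a vector of definite magnitude and direction fixed by nothing at all---an arbitrary imposition---and NAP forbids it, so $a_{AB}=0$ and $v_{AB}$ is constant. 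Finally I would note compatibility with the Galilean group of Proposition~\ref{prop:GT}: because $v_{AB}$ is objective it carries no superindex, its constancy is a frame-independent statement, and the law therefore holds uniformly across the inertial class in the sense of eq.~(\ref{eq:objectivity}).

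The delicate point---the step I expect to be the real obstacle---is to explain why a nonzero \emph{constant} relative velocity is admissible whereas a nonzero relative acceleration is not, given that both are objective quantities built from $x_{AB}$. The resolution I would defend is a clean separation between the \emph{law}, which governs how the state evolves and must itself be non-arbitrary, and the \emph{initial datum}, the instantaneous value of $v_{AB}$, whose existence and non-vanishing is granted by Assumption~\ref{assu:The-inertial-set} and which is a contingent fact of the idealised encounter rather than a decision of the describer. The relative velocity enters as given; the relative acceleration, by contrast, would have to be produced by the law, and NAP bars the law from conjuring a direction and a magnitude out of a situation that, by isolation and by the symmetries of space and time, contains nothing to select them. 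Making this distinction airtight---and arguing that the identical reasoning kills every higher-order alternative (constant acceleration, and so on), each of which would smuggle in an undetermined constant vector---is where the argument must be most careful.
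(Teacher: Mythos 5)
Your outline matches the paper's: Assumption \ref{assu:The-inertial-set} disposes of the rest law, a putative second-order law is written down, NAP strips it of every dependence, and isotropy kills the residual constant vector. The genuine gap is in the stripping step, and it is created by your decision to run the argument entirely on the objective quantities $x_{AB}$, $v_{AB}$. The symmetries you invoke --- isotropy, homogeneity of space, absence of a preferred instant --- do not exclude isotropic, homogeneous, autonomous laws such as $\frac{d}{dt}v_{AB}=-\gamma\,v_{AB}$ or $\frac{d}{dt}v_{AB}=-k\,x_{AB}$: their right-hand sides are not ``vectors fixed by nothing at all,'' since direction and magnitude are supplied by the objective data $v_{AB}$ and $x_{AB}$ themselves, which are invariant under translations, rotations and changes of observer; and they smuggle in no undetermined constant \emph{vector}, only scalars, so your closing isotropy remark does not bite against them. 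Appealing instead to ``no interaction could fix its magnitude'' would be circular, because the statement that only interactions produce changes of velocity is Corollary \ref{def:SI}, which the paper derives \emph{from} this Lemma.

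The paper closes this off by keeping the putative law subjective: it writes $\frac{d^{2}}{dt^{2}}x_{Aa}=\alpha(x_{Aa},v_{Aa},t)$ for an observer $a$ of the inertial class and lets NAP range over the arbitrary choices of reference --- absence of a privileged time removes $t$, a reference displaced by an arbitrary fixed vector forces $\alpha$ to be independent of $x_{Aa}$, and a reference in arbitrary uniform relative motion (a Galilean boost, Proposition \ref{prop:GT}) forces it to be independent of $v_{Aa}$, since the boost shifts $v_{Aa}$ by an arbitrary constant while leaving the acceleration unchanged. Only then is $\alpha$ a constant vector $\Upsilon$, and only then does isotropy give $\Upsilon=0$. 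Your law-versus-initial-datum discussion is a sound gloss on why a nonzero constant $v_{AB}$ survives while a constant $\Upsilon$ in the law cannot, but it does not substitute for the translation and boost arguments that eliminate the functional dependences in the first place.
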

\begin{proof}
Consider the law of motion of an isolated body, $A$, as described
by an observer $a$ in the inertial class. By Assumption \ref{assu:The-inertial-set}
the law of motion cannot be $\frac{d}{dt}x_{Aa}=0$. The most general
law of second order is
\[
\frac{d^{2}}{dt^{2}}x_{Aa}=\alpha(x_{Aa},v_{Aa},t).
\]
However, the law must be the same for all times since there is no
privileged time, $\alpha(x_{Aa},v_{Aa},t)=\alpha(x_{Aa},v_{Aa},0)$.
Additionally, by NAP, a second observer that has selected its reference
position at a fixed distance from $a$ must produce the same law.
It follows that the law cannot depend on $x_{Aa}$ and by the same
reasoning an observer that moves with constant relative speed with
respect to $a$ must observe the same law, hence $\alpha(x_{Aa},v_{Aa},t)=\Upsilon$,
being $\Upsilon$ a constant vector. However, such constant would
indicate a particular direction in space unless $\Upsilon=0$. Hence,
we arrive to the expression 
\begin{equation}
\frac{d^{2}}{dt^{2}}x_{Aa}=0,\label{eq:inertia}
\end{equation}
that satisfies the established requisite of permanency.
\end{proof}
\begin{cor}
\label{def:SI} An \emph{inertial observer} (i.e., an element of the
inertial class) is one in which only interactions are associated to
changes in the velocities.
\end{cor}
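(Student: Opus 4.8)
The plan is to derive the Corollary as the contrapositive of Lemma \ref{lemma:Inertia} combined with the dichotomy between isolated and interacting bodies that is already built into the Definition of an isolated body. First I would fix an inertial observer $a$, that is, an observer whose reference point is an element of the inertial set, and recall that for such an observer the subjective velocity $v_A^a$ of any observed body $A$ coincides, up to the vanishing velocity $0^a$ of its own reference, with the relative velocity $v_{Aa}$ of Definition \ref{def:Velocity}. A ``change in velocity'' as registered by $a$ is then the quantity $\frac{d}{dt}v_{Aa}=\frac{d^2}{dt^2}x_{Aa}$, and the task is to show that whenever this is nonzero it must be charged to an interaction.

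Next I would invoke Lemma \ref{lemma:Inertia}: every isolated body $A$ satisfies $\frac{d^2}{dt^2}x_{Aa}=0$ relative to the inertial observer $a$, i.e.\ it exhibits no change in velocity. The logical heart of the Corollary is the contrapositive of this fact: if $a$ registers a change in velocity, $\frac{d^2}{dt^2}x_{Aa}\neq 0$, then $A$ cannot be isolated. By the Definition of an isolated body --- an idealisation of a body perceived as \emph{not interacting} with other bodies --- the negation ``not isolated'' is precisely ``perceived as interacting with other bodies''. Hence any change in velocity seen by an inertial observer is associated with an interaction, and no other source of such a change is admissible; this is exactly the assertion of the Corollary.

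The one point requiring care --- and the step I expect to be the main obstacle --- is to guarantee that the notion of ``change in velocity'' is well defined across the inertial class, so that it genuinely reflects a property of the observed body (and hence of its interactions) rather than an artefact of the arbitrary choice of inertial reference. Here I would appeal to NAP in the same form used to prove the Lemma: two inertial observers $a$ and $b$ each take an isolated body as reference, and by Lemma \ref{lemma:Inertia} these references move with constant relative velocity, so the observers are connected by a Galilean transformation (Proposition \ref{prop:GT}) with $v_{ba}$ constant. Differentiating $v_A^a=v_A^b+v_{ba}$ once more in time gives $\frac{d^2}{dt^2}x_{Aa}=\frac{d^2}{dt^2}x_{Ab}$, so the presence or absence of a change in velocity is an inertial-class-invariant fact. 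With this objectivity secured, the attribution of every velocity change to an interaction is consistent with NAP and independent of the arbitrary inertial observer, which completes the argument and simultaneously justifies reading the statement as a characterisation of inertial observers.
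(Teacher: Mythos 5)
Your proof is correct and takes essentially the same approach as the paper, whose entire proof reads ``The result is just Lemma \ref{lemma:Inertia} in negative form,'' i.e.\ precisely the contrapositive argument of your first two paragraphs (velocity change $\Rightarrow$ not isolated $\Rightarrow$ interacting, by the definition of an isolated body). Your third paragraph on invariance of the acceleration across the inertial class is not needed for the Corollary itself; it essentially anticipates the Theorem that immediately follows in the paper, which establishes the Galilean connection between inertial observers from this Corollary together with the Lemma and Proposition \ref{prop:GT}.
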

\begin{proof}
The result is just Lemma \ref{lemma:Inertia} in negative form.
\end{proof}
\begin{thm}
Galilean transformations belong to the group of transformations mapping
the position determinations of one inertial observer to those of another
one. 
\end{thm}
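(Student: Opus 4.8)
The plan is to exhibit an explicit element of the group of transformations relating the position determinations of two inertial observers $a$ and $b$, and then to verify that its time-derivative reproduces exactly the Galilean transformation of Proposition \ref{prop:GT}. I would start from the objectivity of relative position (the Corollary to Definition \ref{def:relpos}): the relative position between body $A$ and the reference point of $b$ is objective, so $x_A^a - x_b^a = x_A^b - x_b^b$. Since observer $b$ places its own reference at the subjective origin, $x_b^b(t) = 0^b$, and this collapses to $x_A^b(t) = x_A^a(t) - x_b^a(t)$, possibly composed with the element of $IO(3)$ accounting for the arbitrary choice of orientation and reference point. The entire content of the position map is thus carried by $x_b^a(t)$, the trajectory of $b$'s reference point as described by $a$.

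The decisive step is to determine $x_b^a(t)$. Because both observers belong to the inertial class, their reference points are isolated bodies, so Lemma \ref{lemma:Inertia} applies to this pair: $b$'s reference moves with constant relative velocity with respect to $a$'s, i.e. $\frac{d^2}{dt^2} x_b^a = 0$. Integrating, $x_b^a(t) = x_b^a(0) + v_{ba}\, t$ with $v_{ba}$ a constant vector. Substituting, the position map between the two inertial observers takes the time-dependent affine form $x_A^b(t) = R\bigl(x_A^a(t) - x_b^a(0) - v_{ba}\, t\bigr)$, with $R \in O(3)$ the rotation/reflection relating the two orientations.

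Finally I would differentiate following Definition \ref{def:Velocity}. The constant offset $x_b^a(0)$ drops out, the orientation survives only through the (linear) action of the isometry group, and the boost term contributes $-v_{ba}$, giving $v_A^b = R(v_A^a - v_{ba})$; restricting to the subgroup that fixes orientation, $R = Id$, this is precisely $v_A^a = v_A^b + v_{ba}$, the Galilean transformation of Proposition \ref{prop:GT}. Hence the Galilean transformations are realised as the velocity-level action of elements of the position-mapping group, which is what the statement asserts.

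I expect the main obstacle to be the justification of the uniform relative motion of the two origins, and in particular the clean separation between the genuinely new content — the constant boost $v_{ba}\, t$ that distinguishes inertial frames in relative motion — and the purely spatial isometry already subsumed in $IO(3)$. The crucial and slightly delicate point is that Lemma \ref{lemma:Inertia} may be invoked for the pair of reference points only because both are themselves isolated bodies; it is this appeal to the law of inertia, and no further assumption, that forces the relative motion to be rectilinear and uniform and hence the induced transformation to be exactly Galilean.
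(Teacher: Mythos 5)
Your proposal is correct and follows essentially the same route as the paper: both arguments rest on observing that two inertial observers' reference points are isolated bodies, so Lemma \ref{lemma:Inertia} forces their relative velocity to be constant, and then on Proposition \ref{prop:GT} to identify the induced velocity-level map as a Galilean transformation. You merely make explicit the intermediate affine position map (constant offset plus boost, composed with an isometry) that the paper's two-line proof leaves implicit.
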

\begin{proof}
By Corollary \ref{def:SI} and Lemma \ref{lemma:Inertia} two inertial
observers behave as two isolated bodies with constant (objective)
relative velocity. The subjective velocities determined by each system
are related via Proposition \ref{prop:GT}.
\end{proof}
It is important to notice that Lemma \ref{lemma:Inertia} has an equivalent
statement in terms of subjective space and time. The proof is based
on the homogeneous and isotropic character of the subjective space. 

Notice that the inertial observer is a resource of the subjective
point of view, since the objective presentation focuses only in relations.
In terms of the objective presentation if two bodies present a relative
velocity that changes with time, then both of them cannot belong in
the inertial class, i.e., at least one of them must be in interaction
with something else. It cannot be ruled out that both of them interact
and none is in the inertial class. Hence we have,
\begin{cor}
The relative acceleration $\alpha_{BA}$, of the body $B$ with respect
to an isolated body, $A$, depends only on the relative positions
and velocities of the interacting material bodies. $\alpha_{BA}\equiv\alpha_{BA}(x_{BC},v_{BC};\mbox{ interaction attributes})$,
where $C$ is some other body in interaction with $B$.
\end{cor}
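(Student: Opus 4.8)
The plan is to rerun the argument of Lemma~\ref{lemma:Inertia}, this time for a body $B$ that \emph{is} interacting, using the isolated body $A$ merely to fix the inertial class. First I would exploit that $A$ is isolated: by Lemma~\ref{lemma:Inertia} any inertial observer $a$ records $\frac{d^2}{dt^2}x_{Aa}=0$, so the relative acceleration collapses to
\[
\alpha_{BA}=\frac{d^2}{dt^2}x_{BA}=\frac{d^2}{dt^2}x_{Ba}-\frac{d^2}{dt^2}x_{Aa}=\frac{d^2}{dt^2}x_{Ba}.
\]
Since any two isolated bodies have vanishing mutual relative acceleration, $\alpha_{BA}$ is in fact the same for every choice of isolated reference, hence it is a well-defined objective attribute of $B$ together with its interactions, and I may compute it in whichever inertial frame is convenient.

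Next I would write the most general admissible second-order law for $B$ as recorded by the inertial observer $a$, allowing a priori dependence on the position and velocity of every body and on an explicit time,
\[
\frac{d^2}{dt^2}x_{Ba}=\alpha\bigl(x_{Ba},v_{Ba},x_{Ca},v_{Ca},x_{Aa},v_{Aa},t;\ \text{interaction attributes}\bigr),
\]
where $C$ denotes the body (or bodies) actually in interaction with $B$. The remainder of the proof is three NAP reductions applied to $\alpha$, exactly parallel to those in Lemma~\ref{lemma:Inertia}. (i) There is no privileged instant, so invariance under a shift of the time origin removes the explicit $t$. (ii) Homogeneity of subjective space: an inertial observer whose reference point is displaced by a fixed vector must record the same law, which forces $\alpha$ to depend on the positions only through their differences, i.e. through relative positions. (iii) Galilean invariance: by Proposition~\ref{prop:GT} and the Theorem that Galilean transformations relate the position determinations of inertial observers, an observer in uniform relative motion must record the same law, so the velocity dependence enters only through relative velocities.

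The step that carries the real content---and the main obstacle---is the elimination of the reference body $A$ from the surviving relative quantities. Reductions (ii) and (iii) leave $\alpha$ depending on the relative positions and velocities among all bodies, including the pairs $x_{BA},v_{BA}$ formed with the isolated reference. To discard these I would invoke NAP on the \emph{arbitrary choice} of isolated reference: by Assumption~\ref{assu:The-inertial-set} distinct isolated bodies are in relative motion, so their relative positions and velocities with respect to $B$ differ and vary in time, yet by Corollary~\ref{def:SI} each is an equally legitimate inertial reference. Were $\alpha$ to depend on $x_{BA}$ or $v_{BA}$ it would depend on which isolated body is chosen, contradicting NAP; hence all dependence on the non-interacting body $A$ drops out and only the relative positions and velocities of the genuine interaction partners survive. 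Combining this with the identity $\alpha_{BA}=\frac{d^2}{dt^2}x_{Ba}$ of the first paragraph yields $\alpha_{BA}=\alpha_{BA}(x_{BC},v_{BC};\ \text{interaction attributes})$, as claimed.
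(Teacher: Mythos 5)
Your argument is correct and follows the paper's intended route: the corollary is stated there with no explicit proof (just ``Hence we have,'' after the discussion of the objective presentation), and your reconstruction reruns precisely the NAP reductions from the proof of Lemma~\ref{lemma:Inertia} --- no privileged time, homogeneity of subjective space, Galilean invariance --- before discarding the reference body. The one point of divergence is cosmetic: where you eliminate $x_{BA},v_{BA}$ by invoking NAP on the arbitrary choice of isolated reference (correctly noting that Assumption~\ref{assu:The-inertial-set} makes distinct choices genuinely different), the paper would discard them more directly via Corollary~\ref{def:SI}, since a body not in interaction with $B$ cannot be associated with changes in $B$'s velocity; both justifications are in the paper's spirit and reach the same conclusion.
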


\section{Interacting Bodies}

We have shown that the only law of motion compatible with isolated
bodies is the law of inertia. It is time to consider bodies with interactions.
The same law in its negative form is: accelerations (i.e., changes
in the instantaneous velocity) are the result of interactions between
bodies. The simplest form of interaction to be considered involves
just two material bodies. Consider an observer in the inertial class
(hence, it does not interact with the bodies under consideration).
From its perspective, if bodies $A$ and $B$ do not follow the inertial
law, they must be interacting, hence the minimal setup for non-inertial
motion requires two bodies that are described by the inertial (isolated)
observer, which establishes the following relations:

\begin{eqnarray}
\frac{d}{dt}v_{A}^{a} & = & \alpha_{A}^{a}\nonumber \\
\frac{d}{dt}v_{B}^{a} & = & \alpha_{B}^{a}\label{aceleracion}\\
\frac{d}{dt}(v_{A}^{a}-v_{B}^{a})=\frac{d}{dt}v_{AB} & = & \alpha_{AB},\nonumber 
\end{eqnarray}
where \foreignlanguage{english}{$\alpha_{AB}$} is the (objective)
relative acceleration of body $A$ with respect to body $B$. 
\begin{assumption}
\label{assu:additive}Accelerations are additive i.e., for any inertial
observer $a$, the acceleration produced onto a material body (A)
jointly by bodies (B) and (C), given that B and C do not influence
each other, satisfies $\alpha_{A}^{a}(B,C)=\alpha_{A}^{a}(B)+\alpha_{A}^{a}(C)$,
i.e., it is the sum of the accelerations produced by individual interactions
of A with (B) and with (C). 
\end{assumption}
Following Newton,
\begin{assumption}
We consider in the sequel interactions of instantaneous action at
a distance.
\end{assumption}
Other types of interactions may be imagined, but we are focusing on
Newton's programme. This assumption was suggested by observation.
For gravitation, it has so far resisted refutation. Indeed, several
experimental measurements indicate that in classical terms the ``speed
of gravity'' must be larger than $10^{8}C$ (being $C$ the speed
of light), the first argument going back to Laplace \citep{flan98}.

\subsection{Gravitation and mass}

The fundamental interaction between two bodies, the one that is always
present and cannot be compensated, is called \emph{gravitation} of
matter. A fundamental ingredient at this point is Galileo's proof
that all bodies experience the same acceleration in vertical motion
\citet[Third day, p. 173--]{gali14}. This achievement rests both
on the logical need of investigating accelerated bodies after inquiring
about non-accelerated bodies (eq. \ref{eq:inertia}) and on experimental
observation.
\begin{assumption}
\label{assu:AddMass}The gravitational acceleration $\alpha_{A}^{a}(B)$
produced onto a material body ($A$) by a given one ($B$) is proportional
to a characteristic of the body $B$ named the \textbf{mass} (all
other circumstances being identical). The mass of body $B$ is an
objective property (i.e., the same, up to choice of units, for any
inertial observer).
\end{assumption}
In mathematical terms, the accelerations are, letting $g_{A}^{a}(B)$
summarise the rest of the dependencies, 

\begin{eqnarray*}
\alpha_{A}^{a}(B) & = & m_{B}g_{A}^{a}(B)\\
\alpha_{B}^{a}(A) & = & m_{A}g_{B}^{a}(A).
\end{eqnarray*}

\begin{cor}
The mass of an aggregation of matter is the aggregated mass of the
parts.
\end{cor}
\begin{notation}
From here on, we will drop the index of ego, since we are dealing
with just one observer belonging to the inertial class. For example,
we write $\alpha_{A}\equiv\alpha_{A}^{a}$. However, quantities with
two ``body''-subindices, such as $f_{AB}$ are always objective.\footnote{Binkoski \citep{bink16} argues that with three relational hypotheses
it is possible to distinguish inertial systems from non inertial,
while \textquotedbl Galilean relational space-time is too weak of
a structure to support a relational interpretation of classical mechanics.\textquotedbl{}
At this point of our formulation it is clear that Newtonian mechanics
considers an (external) observer, something what is necessarily non-relational.
In more general terms, the dialectic objective-subjective (attaining
objectivity by negating, going across, subjectivity) that we have
presented solves the apparent opposition between Newton and Leibniz.
This solution is reached by means of a critical reflection, not by
adding \emph{ad-hoc} postulates aiming to obtain some desired relation
(what would amount to resort to instrumental reason). } 
\end{notation}
\begin{assumption}
\label{assu:DepPos}The gravitational interaction depends only on
relative position.
\end{assumption}
With this we mean that letting $x_{AB}=x_{A}-x_{B}$, then $g_{A}(B)=x_{AB}\phi$,
where $\text{\ensuremath{\phi}}$ is a scalar function that may depend
on the length of the vector $x_{AB}$. When considering the gravitational
interaction of two bodies, $A$, localised in $x_{A}$ and, $B$,
localised in $x_{B}$ according to an inertial observer, we notice
that exchanging attributes between bodies is the same as exchanging
positions, hence an immediate corollary of this assumption is that
$g_{A}(B)=-g_{B}(A)$. As a consequence, to render this symmetry explicit
rather than acceleration it is convenient to consider 
\begin{defn}
\emph{\label{def:GForce}} $F_{AB}=m_{A}\alpha_{A}(B)=m_{A}m_{B}g_{A}(B)$\emph{
is called }gravitational force\emph{.} 
\end{defn}
A related quantity is
\begin{defn}
The linear momentum of a material body is the product of its mass
times its velocity $p=mv$. 
\end{defn}
Thus, the gravitational interaction is described by the symmetric
form 

\begin{eqnarray*}
\frac{d}{dt}p_{A}=m_{A}\frac{d}{dt}v_{A} & = & m_{A}\alpha_{A}(B)\equiv F_{AB}=m_{A}m_{B}g_{A}(B)\\
\frac{d}{dt}p_{B}=m_{B}\frac{d}{dt}v_{B} & = & m_{B}\alpha_{B}(A)\equiv F_{BA}=m_{B}m_{A}g_{B}(A),
\end{eqnarray*}
the latter being a definition of the concept of gravitational force
and of force, the cause for the changes in momentum.
\begin{cor}
The total momentum $p_{A}+p_{B}$ is constant in time when the bodies
interact gravitationally, i.e.,
\[
\frac{d}{dt}\left(p_{A}+p_{B}\right)=\frac{d}{dt}p_{A}+\frac{d}{dt}p_{B}=m_{A}m_{B}g_{A}(B)+m_{B}m_{A}g_{B}(A)=0.
\]
\end{cor}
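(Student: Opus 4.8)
The plan is to read off the two equations of motion just established for the momenta and add them, since the entire content of the statement rests on a single antisymmetry already in hand. First I would recall the two relations displayed immediately above the corollary, namely $\frac{d}{dt}p_{A}=m_{A}m_{B}g_{A}(B)$ and $\frac{d}{dt}p_{B}=m_{B}m_{A}g_{B}(A)$, which are nothing but Definition \ref{def:GForce} applied to each body together with $\frac{d}{dt}p=m\frac{d}{dt}v=m\alpha$. Summing these and using the linearity of differentiation gives $\frac{d}{dt}(p_{A}+p_{B})=m_{A}m_{B}g_{A}(B)+m_{B}m_{A}g_{B}(A)$, so the problem reduces to showing that the right-hand side vanishes.

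The crux is the antisymmetry $g_{A}(B)=-g_{B}(A)$, which is not a new hypothesis but the immediate corollary of Assumption \ref{assu:DepPos} noted in the text: writing $g_{A}(B)=x_{AB}\phi(|x_{AB}|)$, the observation that exchanging the attributes of the two bodies is the same as exchanging their positions sends $x_{AB}\mapsto x_{BA}=-x_{AB}$ while leaving the scalar factor unchanged, since $\phi$ depends only on the common length $|x_{AB}|=|x_{BA}|$. Hence $g_{B}(A)=-g_{A}(B)$, which is precisely the gravitational incarnation of action–reaction in this formulation.

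With this in place I would factor out the scalar masses, using that they are ordinary real numbers so that $m_{A}m_{B}=m_{B}m_{A}$ and that each mass is objective (Assumption \ref{assu:AddMass}), to obtain $m_{A}m_{B}g_{A}(B)+m_{B}m_{A}g_{B}(A)=m_{A}m_{B}\bigl(g_{A}(B)+g_{B}(A)\bigr)=0$. Therefore $\frac{d}{dt}(p_{A}+p_{B})=0$, and since a vector with vanishing time derivative is constant, the total momentum $p_{A}+p_{B}$ is conserved, as claimed.

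I do not anticipate a genuine obstacle here: the physical and logical work has already been spent in deriving the antisymmetry $g_{A}(B)=-g_{B}(A)$ from the relative-position dependence, which plays exactly the role that Newton's third law occupies in the standard presentation. The only point deserving explicit care is to make clear that the cancellation relies jointly on that antisymmetry and on the commutativity and objectivity of the scalar masses, rather than on any further assumption; everything beyond that is the linearity of differentiation.
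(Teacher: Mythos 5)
Your proposal is correct and follows essentially the same route as the paper: the paper likewise treats the corollary as an immediate consequence of the antisymmetry $g_{A}(B)=-g_{B}(A)$ derived from the dependence on relative position, with the displayed chain of equalities doing the rest. Your added care about the commutativity and objectivity of the masses is a harmless elaboration of what the paper leaves implicit.
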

The previous corollary is an immediate consequence of Assumption \ref{assu:DepPos},
i.e., of $g_{A}(B)=-g_{B}(A)$. In Classical Mechanics it is called
\emph{Newton's third law} and also \emph{Principle of Action and Reaction}.

\subsection{Other interactions}

Let us now consider other interactions following the previous Newtonian
scheme. Assumptions \ref{assu:AddMass} and \ref{assu:DepPos} are
too narrow to encompass other types of interactions, despite its adequacy
for gravitation. We rephrase them in a more general setting as 
\begin{assumption*}
\textbf{\textup{{[}5$^{\prime}${]}}} The defining property $Q_{i}$
is additive for $f$, i.e.,{\small  
\[
f(Q_{1}+Q_{2},x_{A},\dots,Q_{B},x_{B},\dots)=f(Q_{1},x_{A},\dots,Q_{B},x_{B},\dots)+f(Q_{2},x_{A},\dots,Q_{B},x_{B},\dots).
\]
}
\end{assumption*}
Here $Q_{i}$ denotes the defining property of body $i$ associated
to the interaction, i.e., the necessary information in order to determine
the interaction, and $f$ is the associated force, i.e., a quantity
representing the departure of bodies $A$ and $B$ from inertial motion.
For example, $Q$ stands for electric charge and $f$ for the Coulomb
electrostatic force (see the final paragraph of Subsection \ref{subsec:tension}
for further discussion).
\begin{assumption*}
\label{do not touch aceleration} \textbf{\textup{{[}6$^{\prime}${]}}}
We consider in the sequel interactions that depend only on relative
position and relative velocity.
\end{assumption*}
Newtonian mechanics is based on the generalisation of Definition \ref{def:GForce}
to other types of interactions, namely
\begin{assumption}
\label{assu:F=00003Dma}Force $f$ is the cause of acceleration. The
latter is proportional to the ratio between force and mass.
\end{assumption}
We have already realised that a consequence of Lemma \ref{lemma:Inertia}
is that the presence of accelerations is an indication (or a symptom)
of the existence of interactions. This assumption states that the
totality of the intervening interactions is \emph{exhausted} in the
acceleration, they are its \emph{cause.} We should notice that this
assumption must be contrasted with experiments\footnote{For example, Weber's formalisation of Faraday's induction law \citet{webe46}
requires for the induction (electromotive) force to depend on the
acceleration (as Faraday's law also does). This may be interpreted
as having a mass that depends on the relative state of motion of the
interacting charges.}. Hence, we write,

\begin{eqnarray}
m_{A}\frac{d^{2}}{dt^{2}}x_{A} & = & f(Q_{A},x_{A},\dots,Q_{B},x_{B},\dots)\label{eq:NE2G}\\
m_{B}\frac{d^{2}}{dt^{2}}x_{B} & = & k(Q_{A},x_{A},\dots,Q_{B},x_{B},\dots).\label{eq:NE2G2}
\end{eqnarray}
In this presentation there is no need to introduce \emph{inertial
mass} as something different from \emph{gravitatory mass}. Gravitation
is taken to be a fundamental (unavoidable) interaction and its associated
mass is what enters in eqs (\ref{eq:NE2G} and \ref{eq:NE2G2}). This
is a consequence of the symmetry of the presentation, by which whatever
pertains the gravitational interaction is explicitly displayed.

\subsubsection{Consequences\label{subsec:Consecuencias}}

The above symmetry considerations may be propounded for eqs. (\ref{eq:NE2G}
and \ref{eq:NE2G2}). In fact, $f(Q_{A},x_{A},\dots,Q_{B},x_{B},\dots)$
is the force that the body with property $Q_{B}$ and geometrical
parameters $x_{B},\cdots$ exerts on the body with property $Q_{A}$
(and $x_{A},\cdots$). The dots indicate that forces may depend on
other geometrical properties than just position (velocity, for example).
The quantity $k(Q_{A},x_{A},\dots,Q_{B},x_{B},\dots)$ is the corresponding
force that $A$ exerts on $B$. It follows by NAP that 
\begin{equation}
k(Q_{A},x_{A},\dots,Q_{B},x_{B},\dots)=f(Q_{B},x_{B},\dots,Q_{A},x_{A},\dots).\label{ojo}
\end{equation}

\begin{lem}
\label{lem:AyR} Under the previous assumptions, for a pair of bodies
$A$ and $B$ in interaction, the forces $f$ (force on $A$ caused
by $B)$ and $k$ (force on B caused by $A)$ satisfy the Generalised
Principle of Action and Reaction, namely 
\begin{eqnarray*}
f & = & x_{AB}\phi_{e}+v_{AB}\phi_{s}+\left(x_{AB}\times v_{AB}\right)\phi_{\bot}\\
k & = & -x_{AB}\phi_{e}-v_{AB}\phi_{s}+\left(x_{AB}\times v_{AB}\right)\phi_{\bot},
\end{eqnarray*}
where $\phi_{e},\phi_{s},\phi_{\bot}$ are scalar functions of relative
position, relative velocity and possibly other parameters, $x_{AB}=x_{A}-x_{B}$
and $v_{AB}=v_{A}-v_{B}$.
\end{lem}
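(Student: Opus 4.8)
The plan is to combine two ingredients that are already in place: the isotropy of subjective space (a consequence of NAP) and the exchange relation \ref{ojo}. First I would invoke Assumption~[6$'$] — that interactions depend only on relative position and relative velocity — to reduce the arguments of both forces to $x_{AB}=x_A-x_B$ and $v_{AB}=v_A-v_B$, together with the scalar properties $Q_A,Q_B$ and any further scalar parameters. Since subjective space carries no preferred direction, NAP demands that $f$ transform as a vector under rotations, i.e.\ $f(Rx_{AB},Rv_{AB})=R\,f(x_{AB},v_{AB})$ for every rotation $R$.

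The core step is a representation result for such isotropic vector functions. For a generic configuration in which $x_{AB}$ and $v_{AB}$ are linearly independent, the triple $\{x_{AB},\,v_{AB},\,x_{AB}\times v_{AB}\}$ is a basis of space, so one may always write $f=x_{AB}\phi_e+v_{AB}\phi_s+(x_{AB}\times v_{AB})\phi_\perp$ with uniquely determined coefficients. The content of the lemma is that these coefficients are \emph{scalars}: by rotation-covariance, the expansion of $f(Rx_{AB},Rv_{AB})$ in the rotated basis $\{Rx_{AB},\,Rv_{AB},\,R(x_{AB}\times v_{AB})\}$ must reproduce $R$ applied to the expansion of $f(x_{AB},v_{AB})$, and uniqueness of coordinates forces $\phi_e,\phi_s,\phi_\perp$ to be rotation-invariant. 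Hence they depend on the configuration only through the invariants $|x_{AB}|^2$, $|v_{AB}|^2$ and $x_{AB}\cdot v_{AB}$ (besides $Q_A,Q_B$). The collinear case $x_{AB}\parallel v_{AB}$, where the cross product vanishes and only the first two terms survive, follows by continuity.

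It then remains to obtain $k$. By the exchange relation \ref{ojo}, $k$ is $f$ with the two bodies interchanged, which sends $x_{AB}\mapsto-x_{AB}$ and $v_{AB}\mapsto-v_{AB}$, and therefore $x_{AB}\times v_{AB}\mapsto(-x_{AB})\times(-v_{AB})=x_{AB}\times v_{AB}$. The three invariants above are even under this interchange, so the scalar coefficients are unchanged; consequently the $x_{AB}$- and $v_{AB}$-terms reverse sign while the cross-product term retains it, which is exactly the asserted form of $k$.

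I expect two points to require care. The main obstacle is the representation result itself: one must argue cleanly that no further rotation-covariant vector can be built from two vectors, which is what the basis-plus-uniqueness argument above secures, with the degenerate collinear configurations treated separately. The subtler point is parity: $x_{AB}\times v_{AB}$ is an axial vector, so the term $(x_{AB}\times v_{AB})\phi_\perp$ survives only under covariance with respect to proper rotations $SO(3)$; demanding invariance under reflections as well would force $\phi_\perp=0$ and collapse the statement to strict collinear action--reaction. I would also note that the scalar coefficients' dependence on $Q_A,Q_B$ is taken symmetric, as befits a single pairwise interaction, so that the same $\phi_e,\phi_s,\phi_\perp$ legitimately appear in both $f$ and $k$.
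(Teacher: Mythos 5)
Your proof is correct and follows essentially the same route as the paper's: expand $f$ and $k$ in the basis $\left\{ x_{AB},v_{AB},x_{AB}\times v_{AB}\right\} $ with scalar coefficients and then use the body-exchange relation (\ref{ojo}) to fix the relative signs of the coefficients. You are in fact more explicit than the paper on two points it leaves largely implicit, namely the rotation-covariance argument showing that the coefficients must be scalar invariants of $x_{AB}$ and $v_{AB}$, and the parity caveat that $x_{AB}\times v_{AB}$ is an axial vector --- which is precisely why the paper only later remarks, informally, that a nonvanishing $\phi_{\bot}$ breaks reflection symmetry and with it the conservation of total momentum.
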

\begin{proof}
By Assumption $6^{\prime}$ there is no other geometric dependency
and we can replace the occurrences of $x_{A},x_{B}$ by $x_{AB}$
and those of $v_{A},v_{B}$ by $v_{AB}$. Let $T$ denote the operation
of exchanging the geometrical properties associated to bodies $A$
and $B$ (or, what is the same, switching their defining properties
$Q_{i}$). We have, by NAP,

\begin{eqnarray}
Tf(Q_{1},Q_{2},x_{AB},v_{AB},\cdots,t) & = & f(Q_{2},Q_{1},x_{AB},v_{AB},\cdots,t)\nonumber \\
 & = & k(Q_{1},Q_{2},-x_{AB},-v_{AB},\cdots,t)\label{intercambio}\\
Tk(Q_{1},Q_{2},-x_{AB},-v_{AB},\cdots,t) & = & k(Q_{\text{2}},Q_{\text{1}},-x_{AB},-v_{AB},\cdots,t)\nonumber \\
 & = & f(Q_{1},Q_{2},x_{AB},v_{AB},\cdots,t)\nonumber 
\end{eqnarray}
and consequently $T^{2}=Id$. The most general expression for a force
between two interacting bodies depending as vector only on the vectors
$x_{AB}$ and $v_{AB}$ is 
\begin{eqnarray*}
f(Q_{1},Q_{2},x_{AB},v_{AB},t) & = & x_{AB}\phi_{e}+v_{AB}\phi_{s}+\left(x_{AB}\times v_{AB}\right)\phi_{\bot}\\
k(Q_{1},Q_{2},x_{AB},v_{AB},t) & = & x_{AB}\psi_{e}+v_{AB}\psi_{s}+\left(x_{AB}\times v_{AB}\right)\psi_{\bot},
\end{eqnarray*}
where the multiplicative factors $\phi_{e},\phi_{s},\phi_{\bot},\psi_{e},\psi_{s},\psi_{\bot}$
are scalar functions of relative position, relative velocity and possibly
other parameters. The reason is that relative position and relative
velocity define only three vectors in space. In other words, $\phi_{e}$,
etc., may depend on various properties (charge, velocity, etc.) but
are scalar coefficients. The vector property of $f$ and $k$ is given
by linear combinations of $x_{AB}$, $v_{AB}$ and $x_{AB}\times v_{AB}$.
We recall that the vector $w\times z$ (called \emph{cross product}
or \emph{vector product} of $w$ and $z$) has a direction that is
perpendicular to both $w$ and $z$ and whenever $w$ and $z$ are
collinear, $w\times z=0$.

Exchanging the properties $Q_{1}$ and $Q_{2}$ of $A$ and $B$ is
equivalent to keeping the properties and exchanging their masses,
positions and velocities. Again by NAP, from eqs.(\ref{ojo} and \ref{intercambio})
we have,
\begin{eqnarray}
f(Q_{1},Q_{2},x_{AB},v_{AB},t) & = & x_{AB}\phi_{e}+v_{AB}\phi_{s}+\left(x_{AB}\times v_{AB}\right)\phi_{\bot}\nonumber \\
 & = & -x_{AB}\psi_{e}-v_{AB}\psi_{s}+\left(\left(-x_{AB}\right)\times\left(-v_{AB}\right)\right)\psi_{\bot}\label{eq:AyR}\\
 & = & -x_{AB}\psi_{e}-v_{AB}\psi_{s}+\left(x_{AB}\times v_{AB}\right)\psi_{\bot},\nonumber 
\end{eqnarray}
which produces 
\begin{equation}
\phi_{e}=-\psi_{e},\,\phi_{s}=-\psi_{s},\,\phi_{\perp}=\psi_{\perp}.\label{accion y reaccion}
\end{equation}
\end{proof}
\begin{rem}
Gravitation, as conceived by Newton, has vanishing $\phi_{s}$ and
$\phi_{\bot}$. 
\end{rem}
\begin{lem}
\label{cor:GAyR}Under the Generalized Principle of Action and Reaction
the total momentum, $p_{A}+p_{B}$, is constant in time if and only
if $\phi_{\perp}=0$.
\end{lem}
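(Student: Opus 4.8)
The plan is to differentiate the total momentum directly and read off the condition on $\phi_{\perp}$. Since $p_{A}=m_{A}v_{A}$ and $p_{B}=m_{B}v_{B}$, the equations of motion (\ref{eq:NE2G}) and (\ref{eq:NE2G2}) give $\frac{d}{dt}p_{A}=f$ and $\frac{d}{dt}p_{B}=k$, so that
\[
\frac{d}{dt}\left(p_{A}+p_{B}\right)=f+k.
\]
First I would substitute the expressions for $f$ and $k$ supplied by Lemma \ref{lem:AyR}. The radial contribution $x_{AB}\phi_{e}$ and the tangential contribution $v_{AB}\phi_{s}$ enter $f$ and $k$ with opposite signs and therefore cancel, while the two perpendicular contributions carry the same sign and add. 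This yields the clean identity
\[
\frac{d}{dt}\left(p_{A}+p_{B}\right)=2\,\phi_{\perp}\left(x_{AB}\times v_{AB}\right),
\]
which already isolates $\phi_{\perp}$ as the sole obstruction to conservation.

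The ``if'' direction is then immediate: if $\phi_{\perp}=0$, the right-hand side vanishes identically and $p_{A}+p_{B}$ is constant in time. For the ``only if'' direction I would argue from the constancy of $p_{A}+p_{B}$ that $\phi_{\perp}\left(x_{AB}\times v_{AB}\right)=0$ must hold for the interacting pair. The hard part will be that this does not \emph{literally} force $\phi_{\perp}=0$, because the factor $x_{AB}\times v_{AB}$ itself vanishes whenever relative position and relative velocity are collinear (purely radial relative motion), leaving $\phi_{\perp}$ unconstrained on that set.

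To close the argument I would invoke the arbitrariness of initial conditions: conservation of $p_{A}+p_{B}$ is demanded for \emph{all} admissible configurations, and the set of configurations in which $x_{AB}$ and $v_{AB}$ are linearly independent — hence $x_{AB}\times v_{AB}\neq0$ — is open and dense in $(x_{AB},v_{AB})$-space. On that set the vector $x_{AB}\times v_{AB}$ is nonzero, forcing $\phi_{\perp}=0$ there; since $\phi_{\perp}$ is a (continuous) scalar function of relative position and velocity, it then vanishes everywhere. Equivalently — and this is the cleaner way to finesse the collinear locus — one notes that wherever $x_{AB}\times v_{AB}=0$ the term $\left(x_{AB}\times v_{AB}\right)\phi_{\perp}$ contributes nothing to either force in any case, so that imposing $\phi_{\perp}=0$ is both necessary and sufficient for $\frac{d}{dt}\left(p_{A}+p_{B}\right)=0$ to hold identically, which is the claimed equivalence.
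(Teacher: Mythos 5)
Your proposal is correct and follows essentially the same route as the paper: differentiate $p_{A}+p_{B}$, use the decomposition of $f$ and $k$ from Lemma \ref{lem:AyR} to obtain $\frac{d}{dt}(p_{A}+p_{B})=2\left(x_{AB}\times v_{AB}\right)\phi_{\bot}$, and then dispose of the collinear locus where $x_{AB}\times v_{AB}=0$ by noting $\phi_{\perp}$ is irrelevant there (the paper adopts the convention of setting it to zero in that case, which matches your closing remark). Your extra care with the density/continuity argument for the ``only if'' direction is a welcome tightening but not a different method.
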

\begin{proof}
We just compute 
\begin{eqnarray*}
\frac{d}{dt}\left(p_{A}+p_{B}\right) & = & f+k\\
 & = & x_{AB}(\phi_{e}-\phi_{e})+v_{AB}(\phi_{s}-\phi_{s})\\
 &  & +\left(x_{AB}\times v_{AB}\right)(\phi_{\bot}+\phi_{\bot})\\
 & = & 2\left(x_{AB}\times v_{AB}\right)\phi_{\bot}.
\end{eqnarray*}
Moreover, if $(x-y)$ is parallel to $(u-v)$ (unidimensional relative
motion) the actual value of $\phi_{\perp}$ is irrelevant and we can
choose to set $\phi_{\perp}=0$ also in that situation. 
\end{proof}
We must notice that the total momentum as perceived by the observer
is a conserved quantity if and only if $\phi_{\perp}=0$ for all forces
in classical mechanics. The idea of conservation of total momentum
is a consequence of Newtonian tradition and not a demand of reason.
It must be established in every new theory of Nature as an additional
assumption subject to empirical consideration.
\begin{lem}
Under the Generalized Principle of Action and Reaction there is no
internal torque if and only if $\phi_{s}=\phi_{\perp}=0$.
\end{lem}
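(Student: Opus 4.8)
The plan is to first make precise what ``internal torque'' should mean in a way compatible with NAP, namely an \emph{objective} quantity built only from the relative data $x_{AB}, v_{AB}$, and then to read off the criterion directly from the decomposition of $f$ supplied by Lemma \ref{lem:AyR}. The natural candidate is the net torque of the interaction pair measured about one of the two bodies: since the force $k$ acts at $x_B$, it contributes nothing to the torque about $B$, so the entire internal torque about $B$ is $\tau = x_{AB}\times f$, which depends on neither the arbitrary origin nor the masses. I would remark in passing why the torque about the \emph{centre of mass} is not the right object: that quantity carries a factor $m_B-m_A$ in front of the $\phi_{\perp}$ term and would therefore make the criterion fail for equal masses. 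The mass-independence of the claimed condition is precisely what singles out the torque about a constituent body.

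With this definition the computation is short. Substituting $f=x_{AB}\phi_e+v_{AB}\phi_s+(x_{AB}\times v_{AB})\phi_{\perp}$ from Lemma \ref{lem:AyR}, using $x_{AB}\times x_{AB}=0$ and the expansion $x_{AB}\times(x_{AB}\times v_{AB})=x_{AB}(x_{AB}\cdot v_{AB})-v_{AB}|x_{AB}|^2$, I would obtain
\[
\tau = x_{AB}\times f = (x_{AB}\times v_{AB})\,\phi_s + \bigl(x_{AB}(x_{AB}\cdot v_{AB})-v_{AB}|x_{AB}|^2\bigr)\,\phi_{\perp}.
\]
The central part $x_{AB}\phi_e$ drops out, as it must for a radial force.

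The key step is to notice that the two surviving terms are mutually orthogonal: the first, $(x_{AB}\times v_{AB})\phi_s$, is perpendicular to the plane spanned by $x_{AB}$ and $v_{AB}$, while the second is a linear combination of $x_{AB}$ and $v_{AB}$ and therefore lies in that plane. Hence their sum vanishes if and only if each term vanishes separately. For a generic configuration, one with $x_{AB}$ and $v_{AB}$ non-collinear, both $x_{AB}\times v_{AB}$ and $x_{AB}\times(x_{AB}\times v_{AB})$ are nonzero, so $\tau=0$ forces $\phi_s=0$ and $\phi_{\perp}=0$ at that configuration; letting the configuration range over all admissible relative positions and velocities gives $\phi_s\equiv 0$ and $\phi_{\perp}\equiv 0$. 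The converse is immediate: if $\phi_s=\phi_{\perp}=0$ then $f=x_{AB}\phi_e$ is central and $\tau=\phi_e\,(x_{AB}\times x_{AB})=0$.

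I expect the main obstacle to be conceptual rather than computational: pinning down the correct notion of internal torque so that the criterion comes out mass-free (this is exactly what rules out the centre-of-mass torque and selects the torque about a constituent body). Once that is settled, the only care needed is the genericity remark handling the collinear configurations, where all the vector terms degenerate simultaneously and impose no constraint, so that the vanishing of $\phi_s$ and $\phi_{\perp}$ is forced only through the non-degenerate configurations.
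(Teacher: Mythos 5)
Your proof is correct, but it takes a genuinely different route from the paper, beginning with a different definition of ``internal torque''. The paper works with the time derivative of the relative angular momentum $x_{AB}\times\frac{m_{A}m_{B}}{m_{A}+m_{B}}v_{AB}$, computes it via $\frac{d}{dt}v_{AB}=\frac{f}{m_{A}}-\frac{k}{m_{B}}$, and obtains
\[
x_{AB}\times\left(x_{AB}\phi_{e}+v_{AB}\phi_{s}\right)+\left(\frac{m_{B}-m_{A}}{m_{A}+m_{B}}\right)x_{AB}\times\left(x_{AB}\times v_{AB}\right)\phi_{\bot},
\]
concluding as you do that the surviving terms point in independent directions. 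Your parenthetical warning about the factor $m_{B}-m_{A}$ is therefore exactly on target: it is precisely the coefficient that appears in the paper's computation, and it means the paper's ``only if'' direction degenerates when $m_{A}=m_{B}$, since the $\phi_{\bot}$ contribution to the relative torque then vanishes identically regardless of $\phi_{\bot}$. Your choice of $\tau=x_{AB}\times f$ (the torque about the constituent body $B$, at which $k$ acts) is objective, mass-free, and yields the stated criterion cleanly; you also supply the genericity remark for collinear $x_{AB},v_{AB}$ that the paper omits (and which both formulations need, as the paper itself implicitly concedes in the preceding lemma when it notes that $\phi_{\perp}$ is irrelevant for unidimensional relative motion). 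What the paper's version buys is a statement directly about the conservation of the standard two-body angular momentum of classical mechanics, which is the physically motivated quantity; what yours buys is an equivalence that actually holds for all mass ratios. The two computations agree on the role of $\phi_{e}$ (annihilated by the cross product) and on the orthogonality argument separating the $\phi_{s}$ and $\phi_{\perp}$ contributions.
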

\begin{proof}
Following Corollary \ref{cor:GAyR}, we compute, recalling that $\frac{d}{dt}v_{AB}=\alpha_{AB}=\frac{f}{m_{A}}-\frac{k}{m_{B}}$,
\begin{eqnarray*}
x_{AB}\times\frac{m_{A}m_{B}}{m_{A}+m_{B}}\frac{d}{dt}v_{AB} & = & x_{AB}\times\frac{m_{A}m_{B}}{m_{A}+m_{B}}\left(\frac{f}{m_{A}}-\frac{k}{m_{B}}\right)\\
 & = & x_{AB}\times\left(x_{AB}\phi_{e}+v_{AB}\phi_{s}\right)\\
 &  & +\left(\frac{m_{B}-m_{A}}{m_{A}+m_{B}}\right)x_{AB}\times\left(x_{AB}\times v_{AB}\right)\phi_{\bot}.
\end{eqnarray*}
The dependence on $\phi_{e}$ vanishes by the properties of the cross
product. Since the remaining terms in the rhs have contributions in
different directions, 
\[
\frac{d}{dt}\left[x_{AB}\times\frac{m_{A}m_{B}}{m_{A}+m_{B}}v_{AB}\right]=0\Leftrightarrow\phi_{s}=\phi_{\perp}=0.
\]
 In Classical Mechanics the quantity in square brackets is called
\emph{angular momentum} and its time-derivative is called \emph{torque}.
\end{proof}

\subsection{Central Forces}

An important issue in Classical Mechanics following the Newtonian
tradition is that of central forces depending only on the relative
distance. From the previous results, the following particular case
can be highlighted: 
\begin{lem}
\label{lem:EnergyCons}Under the previous assumptions, if $\phi_{s}=0$
and $\phi_{e}=h(|x_{AB}|)$ (a scalar function of relative distance
only) then there exists a dynamical quantity (called internal energy)
that is constant in time.
\end{lem}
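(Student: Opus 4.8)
The plan is to exhibit a conserved quantity explicitly, built as the kinetic energy of the relative motion plus a potential term, and then to verify by direct differentiation that its time derivative vanishes. Write $r=|x_{AB}|$ and $\mu=\frac{m_{A}m_{B}}{m_{A}+m_{B}}$ for the reduced mass. The two decisive observations will be that a central interaction whose magnitude depends only on $r$ derives from a potential, and that the component along $x_{AB}\times v_{AB}$ (governed by $\phi_{\bot}$) does no work in the relative motion because it is orthogonal to $v_{AB}$.

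First I would pass to the relative coordinate. Recalling from the proof of the torque lemma that $\frac{d}{dt}v_{AB}=\frac{f}{m_{A}}-\frac{k}{m_{B}}$, I substitute the forms of $f$ and $k$ furnished by Lemma \ref{lem:AyR} with $\phi_{s}=0$ and $\phi_{e}=h(r)$. Using $\mu\left(\frac{1}{m_{A}}+\frac{1}{m_{B}}\right)=1$, this collapses to
\[
\mu\frac{d}{dt}v_{AB}=x_{AB}\,h(r)+\mu\left(\frac{1}{m_{A}}-\frac{1}{m_{B}}\right)\left(x_{AB}\times v_{AB}\right)\phi_{\bot}.
\]
Because $h$ depends on $r$ alone, the central term is a gradient: defining $V(r)=-\int^{r}s\,h(s)\,ds$ gives $V'(r)=-r\,h(r)$, so that $x_{AB}\,h(r)=-\nabla_{x_{AB}}V(r)$. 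The antiderivative exists precisely because $h$ is a scalar function of $r$ only (its dependence on charges or other fixed attributes being constant for a given pair).

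Next I would propose the candidate internal energy
\[
E=\frac{1}{2}\mu\,|v_{AB}|^{2}+V(r),
\]
and compute $\frac{dE}{dt}=\mu\,v_{AB}\cdot\frac{d}{dt}v_{AB}+V'(r)\dot{r}$. Using $\dot{r}=\frac{x_{AB}\cdot v_{AB}}{r}$ and dotting the relative equation of motion with $v_{AB}$, the central term yields $(x_{AB}\cdot v_{AB})\,h(r)$ while the potential term yields $V'(r)\dot{r}=-(x_{AB}\cdot v_{AB})\,h(r)$, and these cancel.

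The step I expect to carry the weight is handling the $\phi_{\bot}$ contribution, since a priori it might spoil conservation (note the lemma does \emph{not} assume $\phi_{\bot}=0$). The resolution is that $v_{AB}\cdot\left(x_{AB}\times v_{AB}\right)=0$, so this term drops out of $v_{AB}\cdot\frac{d}{dt}v_{AB}$ identically, whatever the value of $\phi_{\bot}$ and even though the centre of mass may accelerate. Hence $\frac{dE}{dt}=0$ and $E$ is the required constant of motion. I would close by remarking that $E$ is exactly the internal (relative-motion) energy, with $V$ determined up to the arbitrary additive constant of the indefinite integral, and that conservation here is compatible with the loss of angular-momentum conservation whenever $\phi_{\bot}\neq 0$.
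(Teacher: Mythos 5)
Your proposal is correct and follows essentially the same route as the paper's proof: dot the relative equation of motion with $v_{AB}$, discard the $\phi_{\bot}$ term via $v_{AB}\cdot\left(x_{AB}\times v_{AB}\right)=0$, and absorb the central term into a potential $V$ with $-\nabla V=h(|x_{AB}|)\,x_{AB}$ so that $E=\frac{1}{2}\mu|v_{AB}|^{2}+V$ is conserved. The only difference is cosmetic: you construct $V$ explicitly as an antiderivative, whereas the paper merely posits its existence.
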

\begin{proof}
A standard computation using the relative (objective) quantities defined
previously yields,
\[
\left(v_{AB}\right)\cdot\frac{d}{dt}\left(v_{AB}\right)\equiv\frac{1}{2}\frac{d}{dt}\left(v_{AB}\right)^{2}=\left(v_{AB}\right)\cdot\left(\frac{f}{m_{A}}-\frac{k}{m_{B}}\right),
\]
which by eqs. (\ref{eq:AyR} and \ref{accion y reaccion}) reads (since
$v_{AB}\cdot\left(x_{AB}\times v_{AB}\right)=0$), 
\[
\frac{1}{2}\frac{d}{dt}\left(v_{AB}\right)^{2}=\left(\frac{1}{m_{A}}+\frac{1}{m_{B}}\right)v_{AB}\cdot\left(x_{AB}\phi_{e}+v_{AB}\phi_{s}\right).
\]
In other words, under the conditions of this Lemma, we have, 
\begin{eqnarray*}
\frac{1}{2}\frac{m_{A}m_{B}}{m_{A}+m_{B}}\frac{d}{dt}\left(v_{AB}\right)^{2} & = & h(|x_{AB}|)\,x_{AB}\cdot v_{AB}.
\end{eqnarray*}
We identify in the lhs the \emph{kinetic energy }for the relative
motion. Let $-V(|x_{AB}|)$ be such that $-\nabla V=h(|x_{AB}|)\,x_{AB}$.
Then, $-\frac{dV}{dt}=h(|x_{AB}|)\,x_{AB}\cdot v_{AB}$ and therefore

\[
\frac{d}{dt}\left(\frac{1}{2}\frac{m_{A}m_{B}}{m_{A}+m_{B}}\left(v_{AB}\right)^{2}+V(|x_{AB}|)\right)=0.
\]
The quantity $E=\frac{1}{2}\frac{m_{A}m_{B}}{m_{A}+m_{B}}\left(v_{AB}\right)^{2}+V(|x_{AB}|)$
is known in Classical Mechanics as \emph{internal energy.}
\end{proof}

\subsection{The tension between inertial and non-inertial\label{subsec:tension}}

One of the pillars of our understanding of Nature is observation,
which invariably takes place on Earth (or since recently on its neighbouring
galactic surroundings). Both the surface of the Earth and the Solar
System are regarded as non-inertial references by Classical Mechanics.
Indeed, we have no reasons to doubt of this non-inertiality since
we detect its effects. However, since the Leibniz-Clarke discussion
and lately Mach (see below) it has been an important issue the fact
we have no means to decide whether a reference system is inertial
(fully free from interactions) or not.

Despite this issue, physics has succeeded in conceiving different
fundamental interactions, such as gravitation or the electrostatic
interaction as described by Coulomb. One may wonder how this could
be done in the first place, without having a clue about ``how inertial''
we are. The solution presented here finds its support on three concepts:
(a) the process of idealisation described by Galileo, by which one
identifies and eliminates from observation what is conceived to be
foreign to the interaction under study, e.g., the effect of friction
forces, or the presence of other interactions influencing both the
system and the observer, (b) a fully-objective approach where the
description of an interaction is performed using objective (invariant)
quantities belonging to the interaction pair (relative positions,
charges and the like) and (c) the honest effort to keep the description
free from influences coming from the observer to the largest possible
extent (this description is always provisional until a new, so-far
neglected, influence of the observer is detected). We return to this
issue in Subsection \ref{subsec:present}.

It is illustrative to consider the way in which the founders of electromagnetism
studied the new interactions, an approach based on Assumption \ref{assu:additive}.
 Prior to experimenting with the new interaction, they let the system
to be at rest in the observer's frame as a result of the equilibrium
of the (other) present forces and accelerations. Next, the new influential
condition was established and when a new equilibrium was reached they
measured the balancing force judged to be equivalent (but opposite
in sign) to the newly introduced force. Thus, forces are introduced
as members of an equivalence class rather than by a direct application
of the definition, i.e., no acceleration is truly measured in a first
instance. Such were the methods of Coulomb and Ampère for example.

\section{On Newton's laws of Classical Mechanics}

Those, like us, that have tried to reach a deeper understanding of
Newton's ideas by reading the \emph{Principia}, might have come to
the conclusion that Newton left behind little or no clues on the fundamentals
behind his axioms. In the Scholium to the Definitions \citet{newt87},
Newton writes his famous notion of absolute space and time. Although
he recognises them as related to the common intuition of ``the vulgar''
he proposes that the vulgar conceptions are imperfect images of absolute
time and absolute space, a position that reminds us of the precedence
of Platonic worlds, being the world of ideas the real world \citet{plato60BC}.
Needless to say, Newton relies on the vulgar notions to deliver and
argument for his notions of space and time, place and motion. He indeed
introduces without explicit recognition the idea of Galilean transformations:
``Thus in a ship under sail, the relative place of a body is that
part of the ship which the body possesses, {[}...{]} But real, absolute
rest, is the continuance of the body in the same part of that immovable
space in which the ship itself, its cavity, and all that it contains
is moved. Wherefore if the earth is really at rest, the body which
relatively rests in the ship, will really and absolutely move and
with the same velocity which the ship has on the earth. But if the
earth also moves the truly and absolute motion of the body will arise,
partly from the true motion of the earth in immovable space; partly
from the relative motion of the ship on the earth; and if the body
moves also relatively in the ship; its true motion will arise, partly
from the true motion of the earth in immovable space, and partly from
the relative motions as well of the ship on the earth, as of the body
on the ship;...'' \citep[(Motte)][p. 78]{newt87} For Newton, it
is relative space what can be conceived by the vulgar as fixed to
the heavens: ``All things are placed in time as to order of succession;
and in space as to order of situation. It is from their essence or
nature that they are places; and that the primary place of things
should be movable, is absurd {[}...{]} But because the parts of space
cannot be seen, or distinguished from one another by our senses, therefore
in their stead we use sensible measures of them. For from the positions
and distances of things from any body considered as immovable, we
define all places; and then with respect to such places we estimate
all motions... And so, instead of absolute places and motions we use
relative ones, and that without any inconvenience in common affairs;
but in philosophical disquisitions, we ought to abstract from our
senses and consider things themselves, distinct from what are only
sensible measures of them. For it may be that there is no body at
rest to which the places and motions of others may be referred \citep[(Motte)][p. 79]{newt87}
(Cajori corrected the last phrase to: ``And therefore, as it is possible,
that in the remote region of fixed stars, and perhaps far beyond them,
there may be some body absolutely at rest...'' \citep[(Mote-Cajori)][]{newt87}).

Newton's construction cannot come to an end unless the existence of
an end, absolute space, is introduced \emph{ad-hoc}. In terms of the
present work, Newton's construction is moving from the subjectivity
of one reference into the subjectivity of another, each one carrying
its relative space. The sequence of subjective views can only finish
in an objective view. Is God the observer of absolute space? Thus,
rather than fixed to the heavens, absolute space is fixed to the Heavens,
i.e., objective space would be God's perspective. The same sort of
recursion is present in absolute time (see General Scholium, \citet[p. 503--]{newt87}).

Concerning time, Newton did not conceive a method of measuring absolute
time, but only approximations to the ideal time: ``Absolute time,
in astronomy, is distinguished from relative, by the equation or correction
of the vulgar time. For the natural days are truly unequal, though
they are commonly considered as equal, and used for a measure of time;
astronomers correct this inequality for their more accurate deducing
of the celestial motions. It may be, that there is no such thing as
an equable motion, whereby time may be accurately measured. All motions
may be accelerated and retarded; but the true, or equable, progress
of absolute time is liable to no change.`` \citep[(Motte)][p. 78]{newt87}

Newton conceived absolute space and time as the limit of a process
in which relative motions were added and clocks were perfected in
their regularity. The opposite view of space in those years was that
of Leibniz who sustained the relational view. We have shown in this
work that both views are intimately related and that absolute space
and time consist in a final attempt to maintain the observer in the
scene. In our discussion, in contrast, subjective space and motion
are the result of the introduction of the idealised observer in the
scene. Newton's view of absolute time and space corresponds to the
egocentric stage of the child, as observed by Piaget \citep[p. 367]{piag99},
later to be modified by the recognition of the other. It is important
to notice that although relative space appears as objective, relative
time appears only as intersubjective, being impossible for us to define
an objective measure of time without an \emph{ad-hoc} assumption,
which in all cases amounts to construct a physical equivalent of our
memory such as logging the tics of a clock or the oscillations of
a pendulum.

In \emph{Principia Mathematica} Newton founded the concept of mass
on the intuition of weight: ``The quantity of matter is the measure
of the same, arising from its density and bulk conjointly {[}...{]}
It is this quantity that I mean hereafter everywhere under the name
of body or mass. And the same is known by the weight of each body;
for it is proportional to the weight, as I have found by experiments
on pendulums, very accurately made, which shall be shown hereafter.''
\citep[(Motte)][]{newt87} Such an idea was later challenged in an
\emph{a-posteriori} empiricist view by Mach \citet{mach19}. We have
shown in this work that Newton's perspective is proper to the construction
of knowledge, this is, the genetic meaning of mass emerges from interactions.
We realise that the gravitational interaction is the most notorious
one, and pre-exists the concept of force. In the terms of Popper,
Newton's theory is simpler than Mach's proposal, since it lends itself
more generously to refutation \citet{popp59}.

It is worth noting that \emph{Principia Mathematica} dedicates special
attention to the issue of relative vs. absolute rotations. A fundamental
difference between rotational and translational motion is that rotations
cannot be conceived without acceleration. In the Scholium to the first
chapter Newton discusses centripetal forces: ``\emph{A centripetal
force is that by which bodies are drawn or impelled, or in any way
tend, towards a point as to a centre}. Of this sort is gravity''
\citep[(Motte)][p. 74]{newt87} and he continues later ``The effects
which distinguish absolute from relative motion are, the forces of
receding from the axis of circular motion. For there are no such forces
in a circular motion purely relative, but in a true and absolute circular
motion, they are greater or less, according to the quantity of the
motion.'' \citep[(Motte)][p. 80-81]{newt87} This view contrasts
with Mach's attempts to wipe out the difference between relative and
absolute rotations: ``The principles of mechanics can, indeed, be
so conceived, that even for relative rotations centrifugal forces
arise.'' \citet[(II.VI.5, p. 232)]{mach19}. In that paragraph Mach
remains within the kinematic aspect of relative rotations; what we
may call the visual intuition of rotation. However, in the construction
of knowledge, the child gradually adds other sensations to the visual,
noticeably the physical effort required to sustain a rotation. The
distinction relates to our experiences as children, with relative
motion related to spinning around our vertical axis (while standing
up) and absolute circular motion such as the one used to throw stones
with a sling. While the visual appearance is equivalent, the complete
sensorial experience is quite different. The sensorial difference
has to be accounted for and Newton introduces the distinction between
absolute and relative rotation. Mach makes an enormous metaphysical
effort -thus betraying his own ideas- to disregard the difference
in sensory effects arising in actual rotations as opposed to apparent
ones (called in this context ``relative'' both by Newton and Mach).
Poincaré \citet[(Ch. VII)]{poin13} criticised Mach's view accurately
indicating that in order to confuse apparent rotations with true ones,
we need to assign to the apparent rotation a force which is contrary
to intuition (it increases with distance), a sort of conspiracy of
the rest of the universe to deceive the observer. Mach's attempt to
suppress the subject comes short of Newton's insight about the difference
between subjective and objective descriptions.

Poincaré \citep{poin13} insisted on this matter proposing for discussion
an ``Earth'' always covered (clouded), so that celestial references
could not be observed. The scientists of this ``Earth'' would discuss
relative and absolute rotations. ``In the theory of relative motion
we observe, besides real forces, two imaginary forces, which we call
ordinary centrifugal force and compounded centrifugal force. Our imaginary
scientists can thus explain everything by looking upon these two forces
as real, and they would not see in this a contradiction of the generalised
principle of inertia, for these forces would depend, the one on the
relative positions of the different parts of the system, such as real
attractions, and the other on their relative velocities, as in the
case of real frictions. Many difficulties, however, would before long
awaken their attention. If they succeeded in realising an isolated
system, the centre of gravity of this system would not have an approximately
rectilinear path. They could invoke, to explain this fact, the centrifugal
forces which they would regard as real, and which, no doubt, they
would attribute to the mutual actions of the bodies—only they would
not see these forces vanish at great distances— that is to say, in
proportion as the isolation is better realised. Far from it. Centrifugal
force increases indefinitely with distance. Already this difficulty
would seem to them sufficiently serious, but it would not detain them
for long. They would soon imagine some very subtle medium analogous
to our ether, in which all bodies would be bathed, and which would
exercise on them a repulsive action. But that is not all. Space is
symmetrical—yet the laws of motion would present no symmetry. They
should be able to distinguish between right and left. They would see,
for instance, that cyclones always turn in the same direction, while
for reasons of symmetry they should turn indifferently in any direction.
If our scientists were able by dint of much hard work to make their
universe perfectly symmetrical, this symmetry would not subsist, although
there is no apparent reason why it should be disturbed in one direction
more than in another. They would extract this from the situation no
doubt they would invent something which would not be more extraordinary
than the glass spheres of Ptolemy...'' The discussion continues for
long, mathematising this ``Earth'' until reaching a contradiction.

In terms of genetic epistemology a central place is taken by \emph{cognitive
surpass}\footnote{Original in Spanish: \emph{rebasamiento cognoscitivo} \citep{piag89}.}
shortly said, it consists in a reordering of previous knowledge, extending
it into more general (universal) forms. The old knowledge, however,
is not suppressed. It becomes rather a particular form of the new
one, a form that supports as well the new structure of knowledge.
Thus, the distinction about rotating around our axis and that of revolving
a stone with a sling must be preserved. As indicated by Poincaré,
Mach's posit destroys this difference. Still, the difference does
not imply absolute space, because the latter means to conceive rotations
with respect to something else, while for an extended body like the
Earth rotations correspond to the movement of one part of the Earth
around another. Hence, it is possible and simple to forget about the
stars still preserving the eidetic seeing that originated the distinction.
In the relationalist point of view, it can be said as well that the
stone revolves around us, or both revolve around the common centre
of mass, but in all cases, sensible forces are exerted between the
revolving bodies, the bodies interact. The same can be said with respect
to (parts of) the Earth, but the heavens have nothing to do with it. 

\subsection{The present view\label{subsec:present}}

In this essay we explore the consequences of recognising our humanity,
the acceptance of the undeniable fact that our thoughts have their
grounds in the constructions we produced as babies and infants, later
integrated with our cultural background. This structure is built always
under the supervision of reason, since reason protects the unity of
our conception of the world: the construction by dialectical oppositions
that we have come to call understanding. Thus, the starting point
of our construction is a principle of knowledge, an internal requirement
of both what we are ready to accept as knowledge of the world and
what has to be rejected as such. We named this principle the No Arbitrariness
Principle (NAP). On the positive side, it enables us to pursue the
quest of developing an objective knowledge of the world. Since knowledge
means to put the input of the sensory system in correspondence with
the organisational labour of our brain\footnote{This point of view has several precedents. Kant, \citet[ Introduction]{kant87}
opens the book by saying ``That all our knowledge begins with experience
there can be no doubt. For how is it possible that the faculty of
cognition should be awakened into exercise otherwise than by means
of objects which affect our senses, and partly of themselves produce
representations, partly rouse our powers of understanding into activity,
to compare to connect, or to separate these, and so to convert the
raw material of our sensuous impressions into a knowledge of objects,
which is called experience?'' and later ``Understanding cannot intuite,
and the sensuous faculty cannot think. In no other way than from the
united operation of both, can knowledge arise''. Husserl formulates
similar ideas as ``Isolated experience, even when it is accumulated,
is still worth little to it. It is in the methodical disposition and
connection of experiences, in the interplay of experience and thought,
which has its logically rigid laws, that valid experience is distinguished
from invalid, that each experience is accorded its level of validity,
and that objectively valid knowledge as such, knowledge of nature,
is worked out.'' \citet[p. 234]{huss56} Piaget and Garcia state
``Not only is there an absence of a clear frontier between the contributions
of the subject and those of the object (since we can only know about
interactions between the two) but in addition, it is only to the extent
that logical and mathematical structures are applied that one can
come to attain the object, and objectivity improves as a function
of richer logico-mathematical structures. In fact, the elementary
perceptual object is already partially 'logicized' from the start,
even though much less 'objective' than the more elaborated object.''
\citet[Introduction]{piag89}}, truly objective knowledge can only be sought by a second motion
in which the subject (the observer) removes her/himself from the scene.
This is the master plan of this work, attaining as much objectivity
as possible by going across (transcending) subjectivity.

If space and time have their foundation in the early experiences of
life, interactions are a rather different matter. Interactions are
not objects in space. Forces are metaphysical entities responsible
for the departure from inertial motion, they have no time since causes
are only made apparent by their effects: the gravitational interaction
is present before, during and after the fall of an object. Forces
can only be recognised by their effects and characterised using the
established framework in a process of inference. The first law of
motion has then a stronger support that the other two; the result
is independent of the way we characterise interactions. The second
law and even more the third are in a good degree the result of promissory
assumptions. Newton, in search of credibility for the principle of
action and reaction, relies on contact forces that impress us in a
sensory form (``If you press a stone with your finger, the finger
is also pressed by the stone'' \citep[(Motte)][]{newt87}). In our
discussion we explore to what extent the principle of action and reaction
is a result of NAP, restricting our discussion to forces of instantaneous
action. The result is a generalised principle of action and reaction.
Thus, Newton's third law has a weaker logical support than the first
and second laws. If classical forces are ever found that break reflection
symmetry, we expect total linear momentum not to be conserved by the
interaction.

Apart from assumptions of technical character and others suggested
by observation, the present approach rests on a principle of knowledge
(derived from NAP), namely that the relative properties of a pair
of interacting objects depend only on the objects themselves and their
interaction. Thus, we develop the concept of objective relative distance.
Indeed, the construction rests on the resolution of the tension between
objective and subjective. In this sense, we get past the traditional
issue present in Leibniz, Newton and later Mach and Poincaré (see
below) about ``absolute'' and ``relative'' distances. The quest
here is to objectivise relative distances. The present work disposes
of absolute space, while retaining objectivity. Another basic underlying
idea is that the concepts of space and time are \emph{different}.
Space is related to permanence and time to change. In particular,
space and time are not interchangeable. This is based in the way we
construct our knowledge. 

\section{On the laws of Nature}

In a society that rests on religious concepts it makes sense to consider
that Nature is a creation of God and that She/He endowed Nature with
laws. As religious beliefs decline and faith in our own strength grows,
we come to accept that the laws of Nature are actually laws produced
by humanity to understand Nature, they are \emph{laws for the understanding
of Nature}, constructs generated to organise the sensory world, the
continuation at the level of civilisation of the work autonomously
initiated by every child. As such, the laws of Nature are subject
to a higher set of rules, those discovered by every member of our
species in our early quest for survival. They include the conviction
that ``there is something out there'' that reaches us through our
senses (call it Nature) and therefore, the possibility of objective
knowledge.

Unlike social rules, on which we agree in order to preserve energies
for other relevant social matters and are established by consensus
or power, unlike the ``agreement on disagreeing'' that makes room
for cooperation by setting aside disputes, Truth is the only form
of agreement with Nature, as Nature is not a social actor that can
change behaviour or negotiate the rules. But Truth requires an agreement
with our humanity as well. Thus, the laws of Nature or \emph{truth
about Nature} come as a result of the dialectic interplay between
a universal (humanwise - or civilisationwise) subject and the universe
of sensory observations, Nature. None of them can be absent in the
laws of Nature. In turn, the universal subject requires a correspondence
between the individuals, it requires intersubjectivity. Intersubjectivity
sets the lower level of requisites on the side of the subject for
calling something a law of Nature.

Much of our understanding of Nature comes from what we call ``dialectical
openings to knowledge''. Thus, the recognition of ourselves requires
the simultaneous recognition of not-ourselves, our environment. Likewise,
the motion of isolated bodies, the inertial motion, has its necessary
opponent in the motion of bodies that interact. At first sight, we
could have introduced just unilateral action since the opponent of
isolated is not-isolated, i.e., influenced by (or influencing) others.
But such an idea contradicts the higher level of reasoning, the rules
on rules, one of them being the No Arbitrariness Principle (NAP).
Do we have a reason to support that one body can influence another
without being influenced, that there is an asymmetry among bodies?
So far, the answer is no. 

The present approach is not just discursive, it is constructive as
well. Let us see how it works on a much debated matter. 

\subsection{The speed of light}

During a large part of the XIX-th century and well into the following
century, a number of attempts to measure the speed of light and to
understand its constancy were done. These attempts were mostly independent
of the electromagnetic theory of light (at most, the wave-like properties
of light were used). Experiments such as those by Fizeau in 1848,
later developed and improved by Cornu in 1872-74 \citet{corn76} (using
a light source/detector, a rotating cogwheel and a mirror at rest
relative to the source/detector) consider that there exists a light
path and that the event ``detect the light'' occurs at a later time
than the event ``turn on the light''. The measuring issue can hence
be regarded as purely kinematical, although the connection between
light and electrodynamics was advanced already in the mid 1800's.
The final outcome of these efforts has been that the ``speed of light''
is constant and also that this constancy is incompatible with Newtonian
mechanics. In this Section we will show that under the present approach
based on NAP the last assertion is incorrect. 

\subsubsection{The view of this work}

In the first place, we must raise the objection that light, being
a perception resulting from the electromagnetic interaction, is not
an \emph{object.} Hence, what is meant by ``the speed of light''
needs to be explained. What is truly measured in Fizeau's or Cornu's
experiments is a distance and a time-interval. The quotient of both
has the dimensions of a velocity so we may agree to consider it a
velocity. A view compatible with NAP, is to consider that all three
events related to the measurements (emission, reflection and detection)
are actually the result of a common cause, namely, the electromagnetic
interaction. 

Whatever we make in our interpretation of light, it is undeniable
that what we have called the speed of light is the quotient between
an objective distance and an intersubjective time interval. The result
is objective (or rather intersubjective) and it is subject to the
laws of transformations of objective quantities, this is, it is the
same for all observers. 

\subsubsection{The (traditional) mediator view}

The main step in the traditional conception of light is to interpret
the measured quotient in terms of the velocity of a body or a material
wave, something that has a place in space. Since interactions are
not matter, in principle this interpretation introduces a fundamental
belief: interactions are mediated by substantial entities that as
such have a place in space. As far as we know, neither this assumption
nor its rational/philosophical basis is ever stated. Interactions
do not have a place, interactions require two places and relative
distances, such as the relative distance between source/detector and
mirror. If we want to associate the events emission, reflection and
detection in terms of traditional causal relations it will have to
be in the order: turn on the light, reflect the light at the mirror,
detect the light. The cause (in the modern sense discussed at the
Introduction) of our detection of light cannot have ceased by the
time of the detection. Since without turning on the light there is
no light to be detected, and the same can be said when the mirror
is not in place, both the presence of the mirror and the turning on
of the light are within the determinants of the detected light. However,
the observed events are not the causes in themselves since the first
event might very well have ceased when the third event happens. Similarly,
the detected light immediately associated with the turn-on event could
only be a cause if it travelled (matter-like) in space. The interpretation
in terms of a material analogous allows for questions that do not
correspond to what it is truly measured.

Equating a subjective quantity such as $|v_{L}^{a}|$ (the velocity
of an interpreted material point $L$ with respect to a reference
$a$) with an objective quantity such as $C$ (namely a constant,
and hence objective and invariant) is, to say the least, confusing.
A possible construction could be to consider $\Delta t$ as subjective,
since after all, absolute time was an assumption of Newtonian mechanics.
If space is still to be conceived as absolute, then time might very
well depend on the velocity of the reference system with respect to
absolute space.

Let us work out an exercise along this line of thought. We will tentatively
assume as a new axiom a deeply rooted belief that we are not going
to offer for examination:

\begin{belief} Interactions are mediated by substantial entities that as such have a place in space and a velocity.\label{qm-entity}
\end{belief}

Then, if the substantial entity moves in absolute space with a characteristic
velocity, we would expect that it appears to us with different (and
measurable) relative velocities depending on the relative motion of
our system of references with respect to absolute space. However,
experiments (interpreted in this frame of mind) indicate otherwise:
we measure always the same velocity. Therefore, absolute time, absolute
space, these measurements and the present tacit belief have come into
collision. We can ask ourselves what can we save from the wreckage?
For the sake of the exercise, we insist in keeping the tacit belief
alive. It has been argued that such a state of things is incompatible
with Newtonian mechanics, but again, this is \textbf{not} the case.
Is it possible that velocities transform between different reference
systems satisfying the existence of a universal velocity \emph{and
}the structure of Newtonian mechanics? The following Theorems give
an affirmative solution. 
\begin{defn}
Let $U\in\left\{ z\in\mathbb{R}^{3}:|z|<C\right\} $ , with $|U|$
its Euclidean norm. Further, let $g(U)=G(|U|)$, where $G$ is a strictly
increasing continuous function such that $G(0)=1$ and $\lim_{a\to C^{-}}G(a)=\infty$.
We advance that $U$ will play the role of a velocity in the sequel.
Then, for $|U|,|V|<C$ we define \emph{velocity addition} $U\oplus V$
as 
\[
U\oplus V=W\Leftrightarrow g(W)W=g(U)U+g(V)V,
\]
where addition in the rhs denotes standard vector addition in 3-space.
\end{defn}
\begin{thm}
(Nonlinear presentation of Galileo's group) Velocity addition has
the following properties:
\end{thm}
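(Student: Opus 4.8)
The approach is to recognise that $\oplus$ is nothing other than ordinary vector addition on $\mathbb{R}^3$ transported through the radial map $\Phi(U)=g(U)U=G(|U|)U$, and that the listed properties are precisely the group axioms (closure/well-definedness, commutativity, associativity, identity, inverse). The plan is therefore to establish once and for all that $\Phi$ is a bijection of the open ball $\{|U|<C\}$ onto all of $\mathbb{R}^3$, and then to read off each property of $\oplus$ from the corresponding (trivial) property of $(\mathbb{R}^3,+)$.

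First I would isolate the radial profile $\rho(r)=G(r)\,r$ on $[0,C)$. Since $G(0)=1$ and $G$ is strictly increasing, one has $G(r)\ge 1>0$ throughout, so $\rho$ is the product of the strictly increasing nonnegative factor $r$ with the strictly increasing positive factor $G(r)$; hence $\rho$ is continuous and strictly increasing with $\rho(0)=0$ and $\lim_{r\to C^-}\rho(r)=\infty$, the latter because $r\to C>0$ while $G(r)\to\infty$. By the intermediate value theorem together with strict monotonicity, $\rho$ is a continuous strictly increasing bijection of $[0,C)$ onto $[0,\infty)$. Because $\Phi$ acts radially, fixing each direction and scaling lengths by $\rho$, this at once yields that $\Phi$ is a bijection of $\{|U|<C\}$ onto $\mathbb{R}^3$: injectivity follows from that of $\rho$ (equal images force equal lengths and hence, being codirectional, equal vectors), and surjectivity follows by writing any nonzero $w\in\mathbb{R}^3$ as $w=|w|\,\hat w$ and solving $\rho(r)=|w|$ uniquely for $r\in[0,C)$. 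This bijectivity step is the \emph{crux} of the argument and the one place where all four hypotheses on $G$ are genuinely used; everything afterwards is formal transport of structure.

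With $\Phi$ a bijection, the defining relation $U\oplus V=W\Leftrightarrow\Phi(W)=\Phi(U)+\Phi(V)$ says exactly that $\oplus=\Phi^{-1}\circ({+})\circ(\Phi\times\Phi)$. Well-definedness and closure are then immediate, since $\Phi(U)+\Phi(V)$ is a well-defined element of $\mathbb{R}^3$ whose unique $\Phi$-preimage lies in the ball. I would then dispatch the remaining properties in one line each: commutativity and associativity descend from those of vector addition (applying $\Phi^{-1}$ to $\Phi(U)+\Phi(V)=\Phi(V)+\Phi(U)$ and to $\Phi(U)+\Phi(V)+\Phi(Z)$ associated either way); the identity is $0$ because $\Phi(0)=G(0)\cdot 0=0$; and the inverse of $U$ is simply $-U$, since $\Phi(-U)=G(|-U|)(-U)=-G(|U|)U=-\Phi(U)$, so that $U\oplus(-U)$ has $\Phi$-image $0$. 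In sum, $\Phi$ is a group isomorphism from $(\{|U|<C\},\oplus)$ onto $(\mathbb{R}^3,+)$, which is exactly the assertion that $\oplus$ furnishes a nonlinear presentation of the additive group of Galilean velocity boosts. The only further subtlety to watch, should the stated properties also demand continuity of the operation, is continuity of $\Phi$ and $\Phi^{-1}$; both follow from the continuity and strict monotonicity of $\rho$ already secured.
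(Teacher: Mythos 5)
Your proof is correct and follows essentially the same route as the paper: the one substantive step in both is that $r\mapsto G(r)\,r$ is a continuous strictly increasing bijection of $[0,C)$ onto $[0,\infty)$, which gives closure and well-definedness, after which properties (b)--(e) are read off from ordinary vector addition. Your packaging of this as transport of structure through the isomorphism $\Phi(U)=g(U)U$ is a cleaner way of organising the same verifications the paper performs directly (it applies the definition twice for associativity), but it is not a different argument.
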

\begin{enumerate}
\item $|U|,|V|<C\Rightarrow|W|<C$. 
\item $U\oplus0=U$, i.e., $0$ is the neutral element of velocity addition.
\item $U\oplus V=V\oplus U$.
\item For all $U$, the inverse of $U$ is $-U$, i.e., $U\oplus\left(-U\right)=0$.
\item Associativity: $\left(W\oplus V\right)\oplus U=W\oplus\left(V\oplus U\right)$.
\end{enumerate}
\begin{proof}
For most statements it is a matter of doing the algebra. For statement
a), the function $g(V)|V|$ is a bijection of the interval $[0,C)$
onto the nonnegative reals. $|U|,|V|<C\Rightarrow g(U)U+g(V)V<\infty$,
hence there is a unique $|W|$ in $[0,C)$ such that $g(W)W=g(U)U+g(V)V$.
Statements b), c) and d) follow from the definition. Regarding statement
e), let $A=W\oplus V$ and $B=V\oplus U$. Applying the definition
two times we have: $A\oplus U=g(A)A+g(U)U=g(W)W+g(V)V+g(U)U$. Similarly,
$W\oplus B=g(W)W+g(B)B=g(W)W+g(V)V+g(U)U$.
\end{proof}
In this exercise we consider relative position and velocity as primary
concepts. The addition of time intervals has to be deduced from the
concepts of relative position and velocity.
\begin{defn}
We call \emph{velocity} $V$ (with $|V|<C$ ) the vector satisfying
$\Delta x=V\Delta t$. For any reference system, we call $t$ \emph{subjective
time}, while $T=\frac{t}{g(V)}$ is called \emph{proper time}.
\end{defn}
The following Theorem indicates that while this way of thinking rests
on a subjective time, there exists an underlying intersubjective time
as long as space is objective. 
\begin{thm}
Let $O$ move with velocity $V_{1}$with respect to $S$ and velocity
$V_{2}$ relative to the observer $S^{\prime}$, while $S^{\prime}$
moves with velocity $V_{3}$ relative to $S$, such that $V_{1}=V_{2}\oplus V_{3}$
. Let 
\begin{eqnarray*}
\Delta x & = & V_{1}\Delta t_{1}\\
\Delta x^{\prime} & = & V_{2}\Delta t_{2}\\
\Delta_{SS^{\prime}} & = & V_{3}\Delta t_{3}=-\Delta_{S^{\prime}S}
\end{eqnarray*}
and $\Delta T_{i}=\frac{\Delta t_{i}}{g(V_{i})}$ for $i=1,2,3$.
Then the proper time is invariant, $\Delta T_{1}=\Delta T_{2}=\Delta T_{3}$,
if and only if distances in space are invariant.
\end{thm}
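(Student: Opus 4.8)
The plan is to reduce both directions of the equivalence to a single vector identity and then read off the two implications from it. First I would rewrite every displacement in terms of the proper times and the ``rescaled velocities'' $g(V_i)V_i$. Since $\Delta T_i=\Delta t_i/g(V_i)$ gives $\Delta t_i=g(V_i)\Delta T_i$, substitution into $\Delta x=V_1\Delta t_1$, $\Delta x'=V_2\Delta t_2$ and $\Delta_{SS'}=V_3\Delta t_3$ yields $\Delta x=g(V_1)V_1\,\Delta T_1$, $\Delta x'=g(V_2)V_2\,\Delta T_2$ and $\Delta_{SS'}=g(V_3)V_3\,\Delta T_3$. The natural reading of ``distances in space are invariant'' is the Chasles composition of displacements $\Delta x=\Delta x'+\Delta_{SS'}$ (the sign being fixed by $\Delta_{SS'}=-\Delta_{S'S}$), which expresses that the objective relative displacement of $O$ is the same whether one passes through $S$ or through $S'$. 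In the rescaled variables this condition becomes the single vector equation
\[
g(V_1)V_1\,\Delta T_1 = g(V_2)V_2\,\Delta T_2 + g(V_3)V_3\,\Delta T_3.
\]

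The key structural fact I would use is that the hypothesis $V_1=V_2\oplus V_3$ is, by the very definition of $\oplus$, the vector identity $g(V_1)V_1=g(V_2)V_2+g(V_3)V_3$. For the forward implication I would assume $\Delta T_1=\Delta T_2=\Delta T_3=:\Delta T$, factor $\Delta T$ out of the right-hand side of the displayed equation and invoke this identity to collapse it to $g(V_1)V_1\,\Delta T=g(V_1)V_1\,\Delta T$; hence the composition of displacements holds and distances are invariant. This direction is purely formal once the substitution is made.

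For the converse I would subtract $\Delta T_1$ times the velocity-addition identity from the displayed displacement equation, obtaining $g(V_2)V_2(\Delta T_2-\Delta T_1)+g(V_3)V_3(\Delta T_3-\Delta T_1)=0$. Because each $g(V_i)$ is a positive scalar, the two vectors $g(V_2)V_2$ and $g(V_3)V_3$ are linearly independent exactly when $V_2$ and $V_3$ are, and in that case both scalar coefficients must vanish, giving $\Delta T_2=\Delta T_1=\Delta T_3$. I expect this linear-independence requirement to be the one genuinely delicate point: if $V_2$ and $V_3$ are collinear (strictly one-dimensional relative motion) the displayed equation degenerates to a single scalar relation among three proper times and no longer forces them to coincide, so the ``only if'' part fails in that degenerate configuration. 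The honest way to state the result is therefore to restrict to a non-degenerate (non-collinear) configuration of $V_2,V_3$, or equivalently to demand the invariance of distances for enough independent directions of motion of $O$; under that proviso the equivalence is precisely the vanishing of the two coefficients above.
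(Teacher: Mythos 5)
Your proof is correct and follows essentially the same route as the paper's: both express distance invariance as the displacement composition $\Delta x=\Delta x'+\Delta_{SS'}$, invoke the definition of $\oplus$ to get $g(V_{1})V_{1}=g(V_{2})V_{2}+g(V_{3})V_{3}$, and conclude by matching the coefficients of $V_{2}$ and $V_{3}$. The collinearity caveat you rightly flag in the converse is handled in the paper by quantifying over all triples satisfying $V_{1}=V_{2}\oplus V_{3}$ (``Invariance of distance holds for all velocities $U,V,W$ satisfying $V=U\oplus W$, if and only if\dots''), which is precisely your ``enough independent directions'' proviso.
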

\begin{proof}
The distance between $S$ and $O$ can be determined both by $S$
and by $S^{\prime}.$ Invariance of distances is expressed as
\[
d_{SO}=\Delta x=\Delta x^{\prime}+\Delta_{SS^{\prime}}=d_{SO}^{\prime}
\]
or, equivalently,
\[
V_{1}=V_{2}\frac{\Delta t_{2}}{\Delta t_{1}}+V_{3}\frac{\Delta t_{3}}{\Delta t_{1}}=V_{2}\frac{g(V_{2})}{g(V_{1})}+V_{3}\frac{g(V_{3})}{g(V_{1})}.
\]
The last equality follows from the law of addition of velocities.
Invariance of distance holds for all velocities $U,V,W$ satisfying
$V=U\oplus W$, if and only if
\begin{eqnarray*}
\frac{\Delta t_{2}}{g(V_{2})} & = & \frac{\Delta t_{1}}{g(V_{1})}\\
\frac{\Delta t_{3}}{g(V_{3})} & = & \frac{\Delta t_{1}}{g(V_{1})}.
\end{eqnarray*}
It follows that $\Delta T_{1}=\Delta T_{2}=\Delta T_{3}$. The reverse
implication follows by assuming $\Delta T_{1}=\Delta T_{2}=\Delta T_{3}$
and applying the previous reasoning in reverse order to obtain $d_{SO}=d_{SO}^{\prime}$.
\end{proof}
$\Delta T$ can be interpreted as the time measured by an observer
moving with the object, since in such a case we have $g(0)=1$ and
$\Delta T=\Delta t$. 

As far as we know, the physics emerging from this picture has not
been studied. Certainly, we may imagine other alternatives to exit
the contradictory situation. For instance, we could drop entirely
Tacit belief \ref{qm-entity}, as in the view of this work, or try
other forms of reconciliation such as e.g., keeping Tacit belief \ref{qm-entity}
while dropping both absolute space and time, as well as relative distance.
This part of the story is well known.

We have shown that the argued need to abandon the Newtonian frame
upon the requirement of a constant speed of light is not mandatory.
On the contrary, this course of action is related to a bolder decision:
to associate interactions with the exchange of substantial entities
(i.e., to consider light an object) and subsequently opting for a
solution without all three of absolute space, absolute time and invariant
relative distance. 

\subsection{Poincaré's principle of relativity and related principles}

As we have already mentioned, the No Arbitrariness Principle relates
to Leibniz' principle of sufficient reason. Mach also used the principle
of sufficient reason in the form of a ``principle of symmetry''
\citep[p. 9]{mach19}. At the beginning of the XX-th century, a related
principle became notorious, the \emph{principle of relativity} presented
by \citet[p. 107]{poin13}. ``The motion of any system must obey
the same laws, whether it be referred to fixed axes or to moving axes
carrying along a rectilinear uniform motion. This is the principle
of relative motion that forces upon us for two reasons: first, the
commonest experiences confirm it, and second, the contrary hypothesis
is singularly repugnant to the mind''. Poincaré hints us about the
principle being a principle of knowledge (``not solely the result
of experiment''), but does not elaborate further. As we have shown,
Poincaré's principle belongs to the context of interactions, for in
the description of interactions it should not matter which isolated
body we take (arbitrarily) as reference for our calculations. This
issue was advanced before in this work. Poincaré and Mach inherit
the discussion between absolute and relative spaces already present
in Leibniz-Clarke and Newton. They attempt to resolve this issue in
different ways, most remarkably Mach reducing the cosmos to a set
of empirical relations without subjects (observers). The present approach
goes past this controversy by introducing the arbitrariness of the
observer (and with her/him the concept of space) and subsequently
eliminating the arbitrariness by focusing the understanding on objective
or intersubjective elements. In short, this work takes sides for an
objective and relative description of interactions, with the stress
resting on its objective nature.

\section{Summary and conclusions}

We have reconstructed Newtonian mechanics starting with a minimal
realism \citet{lomb12} that accepts as starting point the existence
of a real, or objective, world. The analysis of this physical world,
we called it Nature, proceeds through several constructive steps.
Some of them consist in dialectical openings to understanding. The
first opening is the opposition between \emph{objective} and \emph{subjective}:
objective opposes \emph{not-objective}, i.e., subjective, which is
what comes from the knowing subject. Understanding Nature is the result
of a dialectic interplay –or dialogue– between the knowing subject
and the object being known. It is worth mentioning that the subject
is not an individual in principle but rather humanity or society.
Knowledge in this primary form cannot be objective, the subject needs
to transcend (negate) her/his subjectivity if she/he is going to reach
any possible objectivity. Hence, the subject establishes principles
of reason such as the principle of sufficient reason, which we have
recast in mathematical terms under the name of No Arbitrariness Principle.
Actually, it is not the subject by her/himself who establishes the
principle but the subject in relation to Nature. There is no form
in which the dialectic objective-subjective can be separated giving
pre-eminence to one of the terms. 

NAP recognises the role of symmetry groups as a basic tool for understanding.
Symmetries contribute with more than just aesthetic appealing \citet{wign64};
they express the permanence of the laws of Nature beyond the arbitrariness
of the knowing subject. Moreover, NAP –or in a broader sense the principle
of sufficient reason– might have represented an adaptive advantage
for the early humanity, allowing to elaborate consequences of our
relation with Nature. 

NAP appears, at first sight, related to Kant's first formulation of
the \emph{categorical imperative} ``Act only according to that maxim
whereby you can, at the same time, will that it should become a universal
law'' \citet[(4:421)]{kant93}. The requisite of universal application
is precisely the action of removing all arbitrariness that may arise
in the application of the maxim to different rational agents. A moral
action must be free of arbitrariness. The  associated set of arbitrariness
usually considered is that of the agents subjected to the law. In
more common terms, we can say: no rational justice exists if the laws
do not pass the test of being equal for all citizens. We believe that
NAP is a fundamental principle of rationality that emerges in different
forms for different contexts.

Important contributions to mechanics come from the early childhood
where the notions of time and space are formed as a consequence of
the active adaptation of the child to her/his environment. In this
stage of our construction, concepts point toward perceptions rather
than to other concepts. With the opening produced by the concepts
of \emph{isolated} and \emph{interacting} comes the first mechanical
law, the only law acceptable to NAP for the condition of permanence
of isolated objects. Repeated use of NAP in different contexts will
finally unravel Newton's laws. But these laws require an increasing
number of assumptions regarding Nature. Indeed, Newton's mechanics
arises as the interplay of reason, idealisation and experience in
our relation to Nature.

We address the laws of Nature not as rules proper to the objects,
as in metaphysical realism \citep{lomb12}, but rather as laws for
the understanding of Nature generated by the interplay between subject
and object. This position becomes quite evident when Galilean transformations
of velocities are not alleged to be experimental laws but rather considered
as a requisite for the suppression of contributions arbitrarily introduced
by the observer.

The adopted approach allows us to attain a higher level of consciousness
on the strength of the different components that contribute to the
construction of mechanics. There is a precedence between these components
which can be better appreciated by considering the consequences of
removing them: (a) the consequence of renouncing to logic and to the
laws of understanding is not being able to understand the world, (b)
renouncing to the early elaborations of primary concepts such as time
and space leads to a dissociation between everyday life and physics,
the latter becoming entirely pragmatic and justified \emph{a-posteriori}
(because it is convenient), (c) in contrast, modifying our temporary
beliefs has no real cost other than the effort of reconstructing our
understanding on a more solid basis. Moreover, the present approach
allows for a critical view of further developments, since it opens
for new alternatives to the current views in different issues, alternatives
that we believe should be explored to decide their worth.

An important reflection corresponds to the use of tacit beliefs (also
known, in mathematics, as \emph{hidden assumptions} or \emph{hidden
lemmas}). Inasmuch as they are hidden and as such can escape the scrutiny
of reason, they must be considered dangerous \citet{liss17}. They
elude the principle of necessary reason and usually protect something
that is not explicitly mentioned. In the case of Tacit belief \ref{qm-entity},
it hides the fact that we support the argument in a material model,
be it wave or particle, on the basis that we cannot imagine it otherwise.
Our imagination works with memories of the sensible world and in doing
so it limits its value to the material experience. ``Because we are
not able to imagine otherwise'' appears to us as an extremely weak
argument: the confession of a limitation does not constitute a reason.
We can explore the possibilities of the hidden assumption but we should
be ready to abandon it if it comes into conflict with higher principles.

Scientific disciplines are not autonomous, they are ruled by reason,
which is the only autonomous entity. The application of the principles
of reason to understanding is known as \emph{critic}. Critic in science
needs not only be declared but it should be truly exercised. This
work is indeed an exercise of critic. It seems proper to quote Kant
\citep[The discipline of reason]{kant87} on this respect ``... where
reason is not held in a plain track by the influence of empirical
or of pure intuition, that is, when it is employed in the transcendental
sphere of pure conceptions, it stands in great need of discipline,
to restrain its propensity to overstep the limits of possible experience
and to keep it from wandering into error. In fact, the utility of
the philosophy of pure reason is entirely of this negative character.''
The terms pure intuition and pure conceptions in Kant may be understood
from: ``Pure intuition consequently contains merely the form under
which something is intuited, and pure conception only the form of
the thought of an object. Only pure intuitions and pure conceptions
are possible a priori; the empirical only \emph{a posteriori}.''
\citep[p. 75]{kant87}

In conclusion, the construction of knowledge is not only a philosophical
perspective but it is a constructive and critic method as well. Indeed,
the history of knowledge has to recognise the contributions from metaphysical
(unverifiable/irrefutable) ideas such as absolute space and later
the reification of interactions. Changing the underlying metaphysics,
great revolutions operate in physics, which is not automatically equal
to great progresses. In these lines, recognising the need for introducing
some metaphysics, the idea of a pluralist realism \citet{chan12-realism,lomb12}
should be considered not only as a description but rather as a programme.
We might ask if the efforts of critical revision such as those of
Mach or the present work could have been possible without the preceding
developments and their metaphysical ingredients. Critic, as the negative
tool of philosophy, cannot operate in the void but can operate as
self-criticism in the process of construction of knowledge. 

\bibliographystyle{spbasic}
\bibliography{nuevasreferencias,referencias}

H. G. Solari is professor of physics at the School of Exact and Natural
Sciences, University of Buenos Aires and research fellow of the National
Research Council of Argentina (CONICET). He has contributed research
to physics, mathematics, biology (ecology and epidemiology) and more
recently has been engaged in Applied Philosophy of Sciences (defined
as a reflexive and critical practice of science grounded in philosophy).
Most of his work is of interdisciplinary type.

M. A. Natiello is professor of applied mathematics at the Centre for
Mathematical Sciences, Lund University. He has contributed research
in chemistry, physics, mathematics, biology. His current research
interests are Applied Philosophy of Sciences and Population Dynamics
with a focus on social, ecological and epidemiological aspects. 

\end{document}